\newcommand{\la}{\langle}
\newcommand{\ra}{\rangle}
\newcommand{\cl}{\mathcal}
\theoremstyle{plain}
\newtheorem{theorem}{Theorem}[section]
\newtheorem{proposition}[theorem]{Proposition}
\newtheorem{corollary}[theorem]{Corollary}
\theoremstyle{definition}
\newtheorem{remark}[theorem]{Remark}
\newtheorem{definition}[theorem]{Definition}
\numberwithin{equation}{section}
\newcommand{\norm}[1]{\|#1\|}
\newcommand{\mss}{\hspace{0.2cm}}
\newcommand{\ms}{\hspace{0.25cm}}
\newcommand{\cA}{\mathcal{A}}
\newcommand{\cB}{\mathcal{B}}
\newcommand{\cC}{\mathcal{C}}
\newcommand{\cF}{\mathcal{F}}
\newcommand{\cL}{\mathcal{L}}
\newcommand{\cR}{\mathcal{R}}
\newcommand{\cS}{\mathcal{S}}
\newcommand{\bC}{\mathbb{C}}
\newcommand{\bE}{\mathbb{E}}
\newcommand{\bR}{\mathbb{R}}
\newcommand{\fR}{\mathfrak{R}}
\newcommand{\Proj}{\mathsf{\Pi}}
\newcommand{\ii}{\mathrm{i}}
\begin{document}
\title[Conformal viscous hydrodynamics]{Local well-posedness in Sobolev spaces for first-order
conformal causal relativistic viscous hydrodynamics}
\author[Bemfica, Disconzi, Rodriguez, Shao]{
Fabio S. Bemfica$^{* \$}$, 
Marcelo M. Disconzi$^{** \#}$, 
Casey Rodriguez$^{*** \dagger}$,
and Yuanzhen Shao$^{**** \ddagger}$
}

\thanks{$^{\$}$FSB gratefully acknowledges support from a Discovery grant administered by Vanderbilt University.
}
	
\thanks{$^{\#}$MMD gratefully acknowledges support from a Sloan Research Fellowship 
provided by the Alfred P. Sloan foundation, from NSF grant \# 1812826,
 from a Discovery grant administered by Vanderbilt University, and from
 a Dean's Faculty Fellowship.
}


\thanks{$^{*}$Universidade Federal do Rio Grande do Norte, Natal, RN, Brazil.
\texttt{fabio.bemfica@ect.ufrn.br}}

\thanks{$^{**}$Vanderbilt University, Nashville, TN, USA.
\texttt{marcelo.disconzi@vanderbilt.edu}}

\thanks{$^{***}$Massachusetts Institute of Technology, Cambridge, MA, USA.
\texttt{caseyrod@mit.edu}}

\thanks{$^{****}$The University of Alabama, Tuscaloosa, AL, USA.
\texttt{yshao8@ua.edu}}

\begin{abstract}
In this manuscript, we study the theory of conformal relativistic 
viscous hydrodynamics introduced in \cite{BemficaDisconziNoronha}, which
provided a causal and stable first-order theory of relativistic fluids with viscosity.
The local well-posedness of its equations of motion has been previously established
in Gevrey spaces. Here, we improve this result by proving local well-posedness
in Sobolev spaces.

\bigskip

\noindent \textbf{Keywords:} relativistic viscous fluids; conformal symmetry;
causality; local well-posedness.

\bigskip

\noindent \textbf{Mathematics Subject Classification (2010):} 
Primary: 35Q75; 
Secondary: 	
	35Q35,  	
35Q31, 

\end{abstract}

\maketitle

\tableofcontents

\section{Introduction\label{S:Intro}}
Relativistic hydrodynamics is an essential tool in several branches of physics,
including high-energy nuclear physics \cite{Baier:2007ix}, astrophysics \cite{RezzollaZanottiBookRelHydro}, and cosmology \cite{WeinbergCosmology},
and it is also a fertile source of mathematical problems  (see, e.g., the monographs
\cite{ChristodoulouShocks,ChristodoulouShockDevelopment,ChoquetBruhatGRBook,
AnileBook,RezzollaZanottiBookRelHydro} and references therein). This paper
is concerned with the local well-posedness of the Cauchy 
problem to the equations of motion of relativistic viscous fluids.

More precisely, we consider the energy-momentum tensor for a relativistic conformal
fluid given by
\begin{align}
\label{E:Energy_momentum}
\mathcal{T}_{\alpha \beta} & = 
(\varepsilon + A )(u_\alpha u_\beta +\frac{1}{3} \Proj_{\alpha \beta} )
- \upeta \upsigma_{\alpha \beta} + u_\alpha Q_\beta + u_\beta Q_\alpha,
\end{align}
where
\begin{align}
\begin{split}
A & 
= 3 \upchi (\frac{ 1}{\uptheta} u^\mu \nabla_\mu \uptheta + \frac{1}{3} \nabla_\mu u^\mu ),
\\
Q_\alpha & = \uplambda  ( \frac{1}{\uptheta} \Proj_{\alpha}^\mu  \nabla_\mu \uptheta + 
u^\mu \nabla_\mu u_\alpha ),
\\
\upsigma_{\alpha\beta} & = \Proj_\alpha^\mu \nabla_\mu u_\beta + \Proj_\beta^\mu \nabla_\mu u_\alpha
-\frac{2}{3} \Proj_{\alpha\beta} \nabla_\mu u^\mu.
\end{split}
\nonumber
\end{align}
Above, $\varepsilon$ is the fluid's energy density;
$u$ is the fluid's four-velocity, which satisfies the constraint
\begin{align}
\label{E:u_unit}
g_{\alpha\beta} u^\alpha u^\beta &= -1,
\end{align}
where $g$ is the spacetime metric\footnote{By ``metric" we always mean a ``Lorentzian
metric."}; $\Proj$ is the projection onto the space orthogonal to $u$, given by
$\Proj_{\alpha\beta} = g_{\alpha\beta}+ u_\alpha u_\beta$;
 $\uptheta$ is the temperature that satisfies $\varepsilon = \varepsilon_0 \uptheta^4 $,
 where  $\varepsilon_0 > 0$ a constant; $\upeta$, $\upchi$, and $\uplambda$ are
 transport coefficients, which are known functions of $\varepsilon$ and 
 model the viscous effects in the fluid; and
 $\nabla$ is the covariant derivative associated with the metric $g$.
Indices are raised and lowered using the spacetime metric, lowercase Greek indices vary from $0$
to $3$, Latin indices vary from $1$ to $3$, 
repeated indices are summed over their range, and expressions such as
$z_{\alpha}$, $w_{\alpha\beta}$, etc. represent the components of a vector or tensor
with respect to a system of coordinates $\{ x^\alpha \}_{\alpha=0}^3$ in spacetime,
where the coordinates are always chosen so that $x^0=t$ represents a time coordinate.
We will consider the fluid dynamics in a fixed background, so that the metric $g$ is given.

The equations of motion are given by 
\begin{align}
\label{E:Div_T}
\nabla_\alpha \mathcal{T}^\alpha_\beta = 0
\end{align}
supplemented by the constraint \eqref{E:u_unit}. 

We now state our result. After the statement, we discuss our assumptions and provide
some further context.
We note that in view of \eqref{E:u_unit}, 
it suffices to provide the components of $u$ tangent to $\{t=0\}$
as initial data; this explains the statement involving the projector $\mathcal{P}$
in the Theorem.

\begin{theorem}
\label{T:main_theorem}
Let $g$ be the Minkowski metric on $\mathbb{R} \times \mathbb{T}^3$,
where $\mathbb{T}^3$ is the three-dimensional torus. Let $\upeta: (0,\infty) \rightarrow
(0,\infty)$ be an analytic function, $\upchi = a_1 \upeta$, and $\uplambda = a_2 \upeta$,
where $a_1$ and $a_2$ are positive constants satisfying 
$a_1 > 4$ and $a_2 \geq 3a_1/(a_1 - 1)$. 
Let $\varepsilon_{(0)} \in H^r(\mathbb{T}^3,\mathbb{R})$,
$\varepsilon_{(1)} \in H^{r-1}(\mathbb{T}^3,\mathbb{R})$,
$u_{(0)} \in H^r(\mathbb{T}^3,\mathbb{R}^3)$, 
and 
$u_{(1)} \in H^{r-1}(\mathbb{T}^3,\mathbb{R}^3)$
be given, where $H^r$ is the Sobolev space and $r > 7/2$. Assume that 
$\varepsilon_{(0)} \geq C > 0$ for some constant $C$.

Then, there exists a $T>0$, a function 
\begin{align}
\nonumber
\varepsilon \in C^0([0,T),H^r(\mathbb{T}^3,\mathbb{R})) \cap 
C^1([0,T),H^{r-1}(\mathbb{T}^3,\mathbb{R})) \cap 
C^2([0,T),H^{r-2}(\mathbb{T}^3,\mathbb{R})),
\end{align}
and a vector field 
\begin{align}
u \in C^0([0,T),H^r(\mathbb{T}^3,\mathbb{R}^4)) \cap 
C^1([0,T),H^{r-1}(\mathbb{T}^3,\mathbb{R}^4)) \cap 
C^2([0,T),H^{r-2}(\mathbb{T}^3,\mathbb{R}^4))
\end{align}
such that equations \eqref{E:u_unit} and \eqref{E:Div_T} hold on 
$[0,T)\times \mathbb{T}^3$, and satisfy
$\varepsilon(0,\cdot) = \varepsilon_{(0)}$, 
$\partial_t \varepsilon(0,\cdot) = \varepsilon_{(1)}$,
$\mathcal{P}u(0,\cdot) = u_{(0)}$,
and $\mathcal{P} \partial_t u(0,\cdot) = u_{(1)}$, where $\partial_t$
is the derivative with respect to the first coordinate in $[0,T)\times \mathbb{T}^3$
and $\mathcal{P}$ is the canonical projection from the tangent bundle of
$[0,T)\times \mathbb{T}^3$ onto the tangent bundle of $\mathbb{T}^3$.
Moreover, $(\varepsilon,u)$ is the unique solution with the stated properties.
\end{theorem}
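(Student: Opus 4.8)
The plan is to recast the equations of motion \eqref{E:Div_T} together with the constraint \eqref{E:u_unit} as a quasilinear system for which a symmetric-hyperbolic (or at least a Leray-type hyperbolic) structure can be exhibited, and then invoke the standard energy-method machinery for such systems in Sobolev spaces. The first step is algebraic: substitute \eqref{E:Energy_momentum} into \eqref{E:Div_T}, expand the covariant derivatives, and use $\varepsilon = \varepsilon_0 \uptheta^4$ to trade $\uptheta$ for $\varepsilon$ (or vice versa; working with $\uptheta$ is often cleaner because the transport coefficients are functions of $\varepsilon$ through $\uptheta$). Because $A$, $Q_\alpha$ and $\upsigma_{\alpha\beta}$ already contain one derivative of $(\uptheta,u)$, the divergence produces \emph{second}-order terms in $(\uptheta,u)$; the heart of the reduction is to organize these principal terms into the form $\mathcal{M}^{\mu\nu}(\uptheta,u,\nabla\text{(first order data)}) \partial_\mu \partial_\nu (\uptheta, u)^T + (\text{lower order})$ and to show that, under the stated inequalities $a_1>4$, $a_2 \ge 3a_1/(a_1-1)$, the characteristic determinant has the right signature so that the system is hyperbolic. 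I expect this hyperbolicity/causality analysis — which is exactly the content of \cite{BemficaDisconziNoronha} translated into a PDE-ready statement — to be the part that must be imported or re-derived carefully, and it is the main obstacle: one must choose the right unknowns (likely $(\uptheta, u^i)$ with $u^0$ eliminated via \eqref{E:u_unit}) and possibly differentiate the equations once to symmetrize, so that the principal symbol becomes symmetric with a coercive time component.

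Concretely, I would proceed as follows. (i) Use \eqref{E:u_unit} to solve for $u^0$ in terms of $u^i$ on each time slice, so the dynamical unknowns are $\mathbf{w} = (\uptheta, u^1, u^2, u^3)$; check that for data with $\varepsilon_{(0)} \ge C > 0$ the four-velocity stays future-directed timelike on a short time interval, keeping $u^0$ a smooth function of $u^i$. (ii) Contract \eqref{E:Div_T} with $u^\beta$ and with $\Proj^\gamma_\beta$ to split it into the ``energy'' equation and the ``momentum'' equations; inspect the second-order part. The viscous terms $\upeta\upsigma$, the $A$-term and the $Q$-term contribute a principal operator that is \emph{not} the d'Alembertian but a modified second-order operator whose symbol depends on $u$ and on $\eta,\chi,\lambda$; the inequalities on $a_1,a_2$ are precisely what make this operator hyperbolic (indeed this is how causality was obtained in \cite{BemficaDisconziNoronha}). (iii) Differentiate the system suitably (or perform a first-order reduction introducing $\partial\mathbf{w}$ as new variables) to exhibit a quasilinear symmetric hyperbolic system $A^0(\mathbf{W})\partial_t \mathbf{W} + A^i(\mathbf{W})\partial_i \mathbf{W} = F(\mathbf{W})$ with $A^0$ positive definite on the relevant state space, where $\mathbf{W}$ collects $\mathbf{w}$ and its first derivatives; the analyticity of $\upeta$ guarantees the coefficients are smooth functions of $\mathbf{W}$ on the open set where $\varepsilon>0$.

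(iv) With the symmetric hyperbolic formulation in hand, apply the classical local existence and uniqueness theorem for such systems (e.g. Kato/Majda-type results, or the Fischer–Marsden / Hughes–Kato–Marsden framework) in $H^{r-1}(\mathbb{T}^3)$ for $\mathbf{W}$, which requires $r-1 > 3/2 + 1$, i.e. $r > 7/2$ — matching the hypothesis. This yields $\mathbf{W} \in C^0([0,T), H^{r-1}) \cap C^1([0,T), H^{r-2})$; unwinding the reduction, $\mathbf{w} = (\uptheta, u^i) \in C^0([0,T), H^r) \cap C^1([0,T), H^{r-1}) \cap C^2([0,T), H^{r-2})$, and by Sobolev embedding (since $r-2 > 3/2$) these are genuine classical solutions of \eqref{E:Div_T}. (v) Translate back from $\uptheta$ to $\varepsilon$ via $\varepsilon = \varepsilon_0 \uptheta^4$ and reconstruct $u^0$ from \eqref{E:u_unit}, so that $\varepsilon$ and $u$ have the regularity asserted in the theorem; the initial data $\uptheta_{(0)}, \uptheta_{(1)}$ for the reduced system are built from $\varepsilon_{(0)}, \varepsilon_{(1)}, u_{(0)}, u_{(1)}$, and one must verify these enforce $\mathcal{P}u(0,\cdot) = u_{(0)}$, $\mathcal{P}\partial_t u(0,\cdot) = u_{(1)}$. (vi) Uniqueness follows from the uniqueness part of the symmetric hyperbolic theory together with the fact that the change of variables $(\varepsilon, u) \mapsto \mathbf{W}$ and its inverse are well-defined and continuous on the solution class. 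The remaining obstacles beyond the hyperbolicity analysis are bookkeeping: ensuring the constraint \eqref{E:u_unit} is propagated (it should be built into the reduction so that it is preserved automatically, or else shown to satisfy a homogeneous transport equation), and tracking the loss of derivatives incurred by the first-order reduction so that the final regularity indices come out exactly as stated.
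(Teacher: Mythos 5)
There is a genuine gap at step (iii), and it is precisely the main difficulty of the problem. You assert that after a first-order reduction the equations ``exhibit a quasilinear symmetric hyperbolic system $A^0(\mathbf{W})\partial_t\mathbf{W}+A^i(\mathbf{W})\partial_i\mathbf{W}=F(\mathbf{W})$ with $A^0$ positive definite,'' and then invoke Kato/Majda/Hughes--Kato--Marsden theory. But no symmetrization of this system is known, and nothing in your outline produces one: the viscous terms $A$, $Q_\alpha$, $\upeta\upsigma_{\alpha\beta}$ make the principal part genuinely non-symmetric, and this is exactly why the earlier results \cite{BemficaDisconziNoronha,DisconziFollowupBemficaNoronha} were confined to Gevrey spaces (a Leray--Ohya-type structure, which you also mention as a fallback, only yields Gevrey well-posedness, i.e.\ it reproduces what was already known and does not give the theorem). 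What the paper actually proves is the strictly weaker property of \emph{strong} hyperbolicity: after introducing the first-order variables $A$, $Q^\alpha$, $S^\alpha$, $S_\alpha^{\mss\beta}$ (together with the integrability conditions obtained by commuting covariant derivatives), one computes the characteristic determinant $\det(\Xi_\alpha\mathcal{A}^\alpha)$ explicitly and shows, under $\upchi>4\upeta$ and $\uplambda\ge 3\upchi\upeta/(\upchi-\upeta)$, that all roots are real and the eigenvectors are complete. Real eigenvalues plus a complete eigenbasis do not put the system within reach of the classical symmetric-hyperbolic existence theorems; one must instead build a symbol-level symmetrizer $N_r$ from the diagonalizing matrix $\mathcal{S}(\tilde v,\upzeta)$, and since the coefficients have only Sobolev regularity this forces the use of a pseudodifferential calculus for rough symbols (the Marschall calculus \cite{MarschallPSOSobolev}) to control all the commutators and adjoints in the energy identity. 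Your proposal assumes this entire analytic core away.

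Two secondary points. First, your claim that the conditions $a_1>4$, $a_2\ge 3a_1/(a_1-1)$ ``make the operator hyperbolic'' is correct in spirit, but in the sense of strong hyperbolicity (reality of the roots of the characteristic polynomial and completeness of eigenvectors), not symmetric hyperbolicity, so it cannot be used to justify step (iv) as stated. Second, the passage from the augmented first-order system back to the original equations \eqref{E:u_unit} and \eqref{E:Div_T} is not mere bookkeeping: a solution of the enlarged system need not automatically satisfy the original second-order equations unless the constraints relating $A$, $Q$, $S$ to derivatives of $(\uptheta,u)$ are propagated. The paper handles this by solving the original system for Gevrey data (using the earlier well-posedness results) and then obtaining the Sobolev solution as a limit of Gevrey solutions controlled by the energy estimates; your outline would need an explicit propagation argument or a similar approximation step.
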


One of the main challenges in the theory of relativistic viscous hydrodynamics
is to construct physically meaningful theories that respect causality, (linear)
stability, and local well-posedness.
The literature on this topic is vast and we refer the reader
to 
\cite{Heinz:2013th,Baier:2007ix,Disconzi_Kephart_Scherrer_2015,
DisconziKephartScherrerNew,Hiscock_Lindblom_stability_1983,Hiscock_Lindblom_instability_1985,RezzollaZanottiBookRelHydro,Strickland:2014pga,Strickland:2014eua} and references therein for discussion and background. Despite the importance
of relativistic viscous hydrodynamics in the study, for example, of the quark-gluon-plasma
that forms in heavy ion-collisions \cite{Heinz:2013th,Baier:2007ix} or in neutron
star mergers \cite{RezzollaImportanceViscosityNeutronStars}, very few models have been
showed to be causal, stable, and locally well-posed, and typical
results of this nature have been only partial \cite{DisconziViscousFluidsNonlinearity,DisconziCzubakNonzero,
BemficaDisconziNoronha_IS_bulk,GerochLindblomCausal,Lehner:2017yes}.

The energy-momentum \eqref{E:Energy_momentum} was introduced in
\cite{BemficaDisconziNoronha}, where a new approach to the formulation of relativistic
viscous hydrodynamics was proposed for the case of a conformal fluid. 
The equations of motion derived from 
\eqref{E:Energy_momentum}, i.e., \eqref{E:u_unit} and \eqref{E:Div_T},
were showed to be causal, linearly stable, and locally well-posed in
Gevrey spaces in \cite{BemficaDisconziNoronha,DisconziFollowupBemficaNoronha}.
In this work, we extend these results by establishing local well-posedness
in Sobolev spaces\footnote{However, only the case of a fixed background Minkowski
metric is treated here, whereas in \cite{BemficaDisconziNoronha,DisconziFollowupBemficaNoronha} the coupling of \eqref{E:Energy_momentum} with Einstein's equations has been studied.}.

Conformal
fluids satisfy the property that the ratio between any two transport coefficients
is constant \cite{Bhattacharyya:2008jc,Baier:2007ix}, which explains our
assumptions on $\upchi$ and $\upeta$. The hypothesis on $a_1$ and $a_2$, in turn,
are the same as in \cite{BemficaDisconziNoronha,DisconziFollowupBemficaNoronha}, and
ensure the causality and linear stability of solutions.

We work on $\mathbb{T}^3$ for simplicity, since using the domain of dependence
property (proved in \cite{DisconziFollowupBemficaNoronha}) one can adapt the proof
to $\mathbb{R}^3$. The assumption $\varepsilon_0 \geq C > 0$, on the other 
hand, is crucial. Without it the equations can degenerate, resulting in a free-boundary
dynamics, a problem that remains largely open even in the case
of a relativistic perfect fluid
\cite{MR2813883,MR3182488,HadzicShkollerSpeck,JangLeFlochMasmoudi,
SchlueHardStars,Oliynyk,OliynykRelativisticBodiesI,GinsbergRelEulerApriori,
OliynykRelativisticLiquidBodies,DisconziRemarksEinsteinEuler}

\section{A new system of equations}
In this section we derive a new system of equations that will allow
us to establish Theorem \ref{T:main_theorem}. In order to do so, throughout this section,
we assume to be given a sufficiently regular solution to 
\eqref{E:u_unit}-\eqref{E:Div_T}.

Using \eqref{E:u_unit} to decompose $\nabla_\alpha \mathcal{T}^\alpha_\beta$
in the directions parallel and orthogonal to $u$, we can rewrite \eqref{E:Div_T}
as
\begin{subequations}{\label{E:EofM}}
\begin{align}
&
u^\alpha\nabla_\alpha A+\frac{4}{3}A\nabla_\alpha u^\alpha
+\nabla_\alpha Q^\alpha+Q_\alpha u^\lambda\nabla_\lambda u^\alpha
-\frac{1}{2}\upeta \upsigma^{\mu\nu}\upsigma_{\mu\nu}+
\frac{4}{3\upchi} \varepsilon_0 \uptheta^4
A=0,
\label{E:EofM_1}
\\
& 
\frac{1}{3}\Proj^\alpha_\mu\nabla_\alpha A+\frac{4}{3}A u^\alpha\nabla_\alpha u_\mu-\upeta\nabla_\alpha\upsigma^\alpha_\mu
+\frac{\upeta}{2} \upsigma^{\alpha\beta}\upsigma_{\alpha\beta} u_\mu+
3\upeta\upsigma_{\mu\lambda} u^\alpha\nabla_\alpha u^\lambda
+u^\alpha\nabla_\alpha Q_\mu
\\
&
-u_\mu Q^\lambda u^\alpha\nabla_\alpha u_\lambda
+\nabla_\alpha u^\alpha  Q_\mu 
+ Q^\alpha\nabla_\alpha u_\mu +\frac{4\varepsilon}{3\uplambda} Q_\mu-\frac{3\upeta}{\uplambda}\upsigma_{\mu\nu} Q^\nu=0.
\label{E:EofM_2}
\end{align}
\end{subequations}
Introducing 
\begin{align}
\begin{split}
S_\alpha^{\mss \beta} &= \Proj_\alpha^\mu \nabla_\mu u^\beta,
\\
S^\alpha &= u^\mu \nabla_\mu u^\alpha, 
\end{split}
\nonumber
\end{align}
we find 
\begin{subequations}{\label{E:Equations_1st_order}}
\begin{align}
u^\mu \nabla_\mu A + \nabla_\mu Q^\mu + r_1 & = 0,
\label{E:Equations_1st_order_1}
\\
\Proj^{\alpha \mu}\nabla_\mu A +3 u^\mu \nabla_\mu Q^\alpha 
+ B_\nu^{\mss\alpha \mu\lambda}\nabla_\lambda S_\mu^{\mss \nu}
+ r_2
& =0,
\label{E:Equations_1st_order_2}
\\
-\frac{1}{\upchi} \Proj^{\alpha \mu}\nabla_\mu A + \frac{3}{\uplambda}
u^\mu \nabla_\mu Q^\alpha - 3 u^\mu \nabla_\mu S^\alpha
+ \Proj^{\alpha\mu}\nabla_\mu S_\nu^{\mss \nu} +r_3& = 0,
\label{E:Equations_1st_order_3}
\\
u^\mu \nabla_\mu S_\alpha^{\mss \beta} - \Proj_\alpha^\nu \nabla_\nu
S^\beta + r_4 & = 0,
\label{E:Equations_1st_order_4}
\\
\frac{1}{\uptheta} u^\mu \nabla_\mu \uptheta + \frac{1}{3} \nabla_\mu u^\mu +
r_5 & = 0,
\label{E:Equations_1st_order_5}
\\
\frac{1}{\uptheta} \Proj^{\alpha \mu} \nabla_\mu \uptheta + u^\mu \nabla_\mu u^\alpha
+ r_6 & = 0,
\label{E:Equations_1st_order_6}
\end{align}
\end{subequations}
where 
\begin{align}
B_\nu^{\mss\alpha \mu\lambda} & = 
-3\upeta(\updelta_\nu^\alpha \Proj^{\mu \lambda}
+ \updelta_\nu^\lambda \Proj^{\alpha \mu} 
- \frac{2}{3} \updelta_\nu^\mu \Proj^{\alpha \lambda}), 
\nonumber
\end{align}
and $r_i$, $i=1,\dots,6$ are smooth functions of 
$A$, $Q^\alpha$, $S^\alpha$, $S_\alpha^{\mss \beta}$, 
$\uptheta$, and $u^\alpha$; no derivative of such quantities appears in the 
$r_i$'s. Above and throughout, $\updelta$ is the Kronecker delta.

The derivation of \eqref{E:Equations_1st_order} is as follows: 
equations \eqref{E:Equations_1st_order_1} and 
\eqref{E:Equations_1st_order_2}
are equations \eqref{E:EofM_1} and \eqref{E:EofM_2}, respectively;
equations
\eqref{E:Equations_1st_order_5} and \eqref{E:Equations_1st_order_6}
are simply the definition of $A$ and $Q^\alpha$;
equations \eqref{E:Equations_1st_order_3} and 
\eqref{E:Equations_1st_order_4} follow from contracting the identities
\begin{align}
\begin{split}
\nabla_\mu \nabla_\nu \uptheta - \nabla_\nu \nabla_\mu \uptheta & = 0,\\
\nabla_\mu \nabla_\nu u^\alpha -\nabla_\nu \nabla_\mu u^\alpha & = 
R_{\mu \nu \ms \lambda}^{\mss \mss \alpha} u^\lambda = 0,
\end{split}
\nonumber
\end{align}
with $u^\mu$ and then with $\Proj^\nu_\lambda$. We also used the identities
\begin{align}
\begin{split}
\frac{1}{\uptheta} \nabla_\alpha \uptheta & = 
- \frac{1}{3 \upchi} u_\alpha A + \frac{1}{\uplambda} Q_\alpha
+\frac{1}{3} u_\alpha S_\mu^{\mss \mu} - \Proj_{\alpha \mu} S^\mu,
\\
\nabla_\alpha u^\beta & = -u_\alpha S^\beta + S_\alpha^{\mss \beta}.
\end{split}
\nonumber
\end{align} 
We write equations \eqref{E:Equations_1st_order} as a quasilinear first order system
for the the variable $\mathbf{\Psi} = (A, Q^\alpha,S^\alpha, 
S_0^{\mss \alpha},S_1^{\mss \alpha},S_2^{\mss \alpha},S_3^{\mss \alpha},
\uptheta, u^\alpha)^T$, with ${}^T$ being the transpose,  as
\begin{align}
\mathcal{A}^\alpha \nabla_\alpha \mathbf{\Psi} + \mathcal{R} & = 0,
\label{E:Matrix_system_1st_order}
\end{align}
where $\mathcal{R} = (r_1,\dots,r_6)^T$ and
$\mathcal{A}^\alpha$ is given by
\begin{align}
\mathcal{A}^\alpha=
\begin{bmatrix}
u^\alpha & \updelta^\alpha_\nu & 0_{1\times 4} & 0_{1\times 4} & 0_{1\times 4} & 0_{1\times 4} & 0_{1\times 4} & 0 & 0_{1\times 4}\\
\Proj^{\mu\alpha} & 3u^\alpha I_4 & 0_{4\times 4} & B_\nu^{\mss\mu 0 \alpha} & 
B_\nu^{\mss\mu 1 \alpha} & B_\nu^{\mss\mu 2 \alpha} & B_\nu^{\mss\mu 3 \alpha} & 0_{4\times 1} & 0_{4\times 4} \\
-\frac{\Proj^{\mu\alpha}}{\upchi} & \frac{3u^\alpha}{\uplambda}I_4 & -3u^\alpha I_4 & \updelta^0_\nu \Proj^{\mu\alpha} & \updelta^1_\nu \Proj^{\mu\alpha} & \updelta^2_\nu \Proj^{\mu\alpha} & \updelta^3_\nu \Proj^{\mu\alpha} & 0_{4\times 1} & 0_{4\times 4} \\
0_{4\times 1} & 0_{4\times 4} & -\Proj^\alpha_0 I_4 & u^\alpha I_4 & 0_{4\times 4} & 0_{4\times 4} & 0_{4\times 4} & 0_{4\times 1} & 0_{4\times 4} \\
0_{4\times 1} & 0_{4\times 4} & -\Proj^\alpha_1 I_4 & 0_{4\times 4} & u^\alpha I_4 & 0_{4\times 4} & 0_{4\times 4} & 0_{4\times 1} & 0_{4\times 4} \\
0_{4\times 1} & 0_{4\times 4} & -\Proj^\alpha_2 I_4 & 0_{4\times 4} & 0_{4\times 4} & u^\alpha I_4 & 0_{4\times 4} & 0_{4\times 1} & 0_{4\times 4} \\
0_{4\times 1} & 0_{4\times 4} & -\Proj^\alpha_3 I_4 & 0_{4\times 4} & 0_{4\times 4} & 0_{4\times 4} & u^\alpha I_4 & 0_{4\times 1} & 0_{4\times 4} \\
0 & 0_{1\times 4} &  0_{1\times 4}  & 0_{1\times 4} & 0_{1\times 4} & 0_{1\times 4} &  0_{1\times 4}  & \frac{u^\alpha}{\uptheta} & \frac{\updelta^\alpha_\nu}{3} \\
0_{4\times 1} & 0_{4\times 4} & 0_{4\times 4} & 0_{4\times 4} & 0_{4\times 4} & 0_{4\times 4} & 0_{4\times 4} & \frac{\Proj^{\mu\alpha}}{\uptheta} & I_4\,u^\alpha \\
\end{bmatrix}
.
\nonumber
\end{align}
Equation \eqref{E:Matrix_system_1st_order} is the main equation we will use to derive estimates.

\section{Diagonalization\label{S:Diagonalization}}
Here, we show that under assumptions consistent with 
those of Theorem \ref{T:main_theorem}, we can diagonalize the principal part of
\eqref{E:Matrix_system_1st_order}.

\begin{proposition} Let $\upxi$ be a timelike vector and assume that 
$\upchi > 4 \upeta > 0$ and that $\uplambda \ge \frac{3\upchi \upeta}{\upchi - \upeta}$. Then:\\
\noindent (i) $\det(\mathcal{A}^\alpha \upxi_\alpha) \neq 0$;\\
\noindent (ii) For any spacelike vector $\upzeta$, the eigenvalue problem 
$\mathcal{A}^\alpha(\upzeta_\alpha + \Lambda \upxi_\alpha)  V = 0$
has only real eigenvalues $\Lambda$ and a complete set of eigenvectors $V$.
\end{proposition}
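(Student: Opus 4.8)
The plan is to analyze the matrix $\mathcal{A}^\alpha \upxi_\alpha$ directly, exploiting the block structure displayed above, rather than attempting a coordinate-free argument. After a rotation and scaling we may assume $\upxi = (1,0,0,0)$, so that $\upxi_\alpha = -\updelta_\alpha^0$ in Minkowski signature and $u^\alpha \upxi_\alpha = -u^0 \neq 0$ since $u$ is timelike; likewise for the eigenvalue problem in (ii) we may take $\upzeta$ to lie in a fixed spacelike plane. First I would compute $\det(\mathcal{A}^\alpha \upxi_\alpha)$ by block elimination. The last two block-rows (the ones coming from \eqref{E:Equations_1st_order_5}--\eqref{E:Equations_1st_order_6}, i.e.\ the definitions of $A$ and $Q^\alpha$) have the nearly diagonal form $\tfrac{u^\alpha}{\uptheta}$ and $I_4 u^\alpha$ on the $(\uptheta, u^\alpha)$ block and can be solved for $\uptheta$ and $u^\alpha$ directly; substituting, the problem reduces to the $(A, Q^\alpha, S^\alpha, S_\mu^{\mss\nu})$ subsystem. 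Within that subsystem the rows coming from \eqref{E:Equations_1st_order_4} likewise let one eliminate the $S_\mu^{\mss\nu}$ block in terms of $S^\alpha$ (modulo the factor $u^\alpha\upxi_\alpha$), and one is left with a small system in $(A, Q^\alpha, S^\alpha)$ whose determinant factors into a power of $(u^\alpha\upxi_\alpha)$ times a polynomial in the transport coefficients. The inequalities $\upchi > 4\upeta > 0$ and $\uplambda \ge \tfrac{3\upchi\upeta}{\upchi-\upeta}$ are exactly what is needed to keep that polynomial from vanishing; this is the computation already carried out (in a different but equivalent parametrization) in \cite{BemficaDisconziNoronha,DisconziFollowupBemficaNoronha} in the course of the causality analysis, so part (i) should follow by quoting and reorganizing that computation.

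For part (ii), the strategy is to reduce the generalized eigenvalue problem $\mathcal{A}^\alpha(\upzeta_\alpha + \Lambda \upxi_\alpha) V = 0$ to a genuine eigenvalue problem by writing $\mathcal{A}^\alpha \upzeta_\alpha = \mathcal{A}^\alpha\upxi_\alpha \cdot M$ for a matrix $M$ (legitimate by part (i)), so that the eigenvalues $\Lambda$ are the eigenvalues of $-M$ and we must show $M$ is diagonalizable over $\mathbb{R}$. The cleanest route is to exhibit a symmetrizer: show that there is a symmetric positive-definite matrix $\mathcal{S}$ such that $\mathcal{S}\mathcal{A}^\alpha \upxi_\alpha$ is symmetric positive-definite and $\mathcal{S}\mathcal{A}^\alpha\upzeta_\alpha$ is symmetric, for all $\upxi$ timelike and $\upzeta$ spacelike; a standard argument then gives real eigenvalues and a complete eigenbasis. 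Finding $\mathcal{S}$ is where the hypotheses on $\upchi, \uplambda, \upeta$ re-enter: the natural guess is block-diagonal, with the block pairing $A$ with the trace $S_\mu^{\mss\mu}$, the block pairing $Q^\alpha$ with $S^\alpha$, and a block on the trace-free part of $S_\mu^{\mss\nu}$ (these are the pairings visible in the off-diagonal structure of $\mathcal{A}^\alpha$ involving $B_\nu^{\mss\mu\lambda\alpha}$), and one checks positivity of each small block reduces to the stated inequalities. Alternatively, if a clean symmetrizer proves elusive, one can instead compute the characteristic polynomial of $M$ explicitly using the same block reduction as in part (i): the timelike/spacelike projector identities cause it to factor through a low-degree polynomial in $\Lambda$ (the ``sound-like'' and ``shear-like'' modes), whose roots are real precisely under the causality conditions, and one then checks separately that no Jordan blocks occur, e.g.\ by verifying the geometric multiplicities match or by a continuity/openness argument from the Minkowski rest-frame case $u = \partial_t$.

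I expect the main obstacle to be the eigenvector-completeness half of (ii): showing the eigenvalues are real is, via the determinant computation, essentially a polynomial positivity statement that the earlier papers already supply, but ruling out nontrivial Jordan blocks requires either producing the symmetrizer $\mathcal{S}$ in closed form (a genuine but bounded linear-algebra computation, delicate because of the many $4\times 4$ blocks and the need for the positivity thresholds to come out exactly as $a_1 > 4$, $a_2 \ge 3a_1/(a_1-1)$) or carefully tracking multiplicities of repeated roots. A secondary, more bookkeeping-level, difficulty is keeping the reduction to the rest frame honest: one must check that the class of matrices $\{\mathcal{A}^\alpha\upxi_\alpha : \upxi \text{ timelike}\}$ is invariant under the relevant changes of frame so that proving the statement for one convenient $\upxi$ and a spanning set of $\upzeta$'s suffices. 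Neither obstacle looks fundamental, but the symmetrizer computation is the step I would budget the most care for.
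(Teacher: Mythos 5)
Your ``alternative'' route for part (ii) is in fact what the paper does: decompose $\Xi_\alpha = \upzeta_\alpha + \Lambda\upxi_\alpha$ into the $u$-parallel scalar $b=\Xi_\alpha u^\alpha$ and $u$-orthogonal vector $a^\alpha=\Proj^{\alpha\mu}\Xi_\mu$, compute $\det(\mathcal{A}^\alpha\Xi_\alpha)$ by repeated Schur complements (first splitting off the $(\uptheta,u)$ block exactly as you propose, then eliminating the $S_\mu^{\mss\nu}$ block), and factor the result into $b^{18}$, $G^3$, $(3b^2-a^\mu a_\mu)$, and a quartic-in-$b$ piece. For each factor the paper then verifies reality of the roots under the stated inequalities and, crucially, writes down the eigenvectors explicitly and counts them so that the geometric multiplicities match the algebraic ones ($3+3+12=18$ for the $b=0$ root, $3+3$ for the double factor $G^3$, and $1$ each for the simple roots). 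Part (i) is, as you anticipate, obtained by setting $\upzeta=0$ in the same computation; the paper does not simply cite the earlier works but carries out the determinant computation in the new $(A,Q,S,S^{\cdot}_{\cdot},\uptheta,u)$ variables.

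Your primary suggestion, however, is suspect as stated. You propose a \emph{single} symmetric positive-definite matrix $\mathcal{S}$ with $\mathcal{S}\mathcal{A}^\alpha\upxi_\alpha$ symmetric positive-definite and $\mathcal{S}\mathcal{A}^\alpha\upzeta_\alpha$ symmetric \emph{for all} timelike $\upxi$ and spacelike $\upzeta$; that is the definition of symmetric hyperbolicity, a strictly stronger property than the ``real eigenvalues $+$ complete eigenbasis'' (strong hyperbolicity) that the proposition actually asserts. Nothing in the structure of $\mathcal{A}^\alpha$ (note the non-symmetric coupling through $B_\nu^{\mss\alpha\mu\lambda}$ and the essentially block-triangular way the $S$-blocks enter) suggests symmetric hyperbolicity, and the paper makes no such claim. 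What does exist, and what Section~\ref{Section 4} actually constructs, is a Kreiss-type, frequency-dependent symmetrizer built from the diagonalizing matrix $\mathcal{S}(\tilde v,\upzeta)$; but its existence is a \emph{consequence} of the eigenvector completeness established in this proposition, not a substitute for it. So the symmetrizer idea has the logical dependence backwards: you need the explicit spectral analysis (your fallback) first. The block-elimination bookkeeping and the eigenvector-count at repeated roots, which you correctly flag as the delicate step, is indeed where the paper spends its effort.
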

\begin{remark} In practice we will take $\upxi = (1,0,0,0)$ and $\upzeta = (0,\upzeta_1, \upzeta_2, \upzeta_3)$. We note that the assumptions on $\upchi$, $\uplambda$, and $\upeta$ on 
Theorem \ref{T:main_theorem} imply the assumptions on these coefficients in the Proposition.
\end{remark}
\begin{proof}
Let $a$ and $b$ be the projection of $\upzeta + \Lambda \upxi$ on the direction orthogonal and
parallel to $u$, i.e., $a^\alpha = \Proj^{\alpha\mu} (\upzeta_\mu + \Lambda \upxi_\mu)$
and $b = (\upzeta_\alpha + \Lambda \upxi_\alpha)u^\alpha$. Then
\begin{align}
\begin{split}
a^\mu a_\mu & = \Proj^{\alpha \mu} \Proj_{\alpha \nu} (\upzeta_\mu + \Lambda \upxi_\mu)
(\upzeta^\nu + \Lambda \upxi^\nu)
= (g^{\alpha \mu} + u^\alpha u^\mu )(\upzeta_\mu + \Lambda \upxi_\mu)
(g_{\alpha\nu} + u_\alpha u_\nu)(\upzeta^\nu + \Lambda \upxi^\nu)
\\
& = (\upzeta_\alpha + \Lambda \upxi_\alpha)(\upzeta^\alpha + \Lambda \upxi^\alpha)
+b^2.
\end{split}
\nonumber
\end{align}
To simplify the notation, set $\Xi_\alpha = \upzeta_\alpha + \Lambda \upxi_\alpha$.
Then
\begin{align}
\det(\Xi_\alpha \mathcal{A}^\alpha) =
& \det
\begin{bmatrix}
b & \Xi^T & 0_{1\times 4} & 0_{1\times 4} & 0_{1\times 4} & 0_{1\times 4} & 0_{1\times 4} & 0 & 0_{1\times 4}\\
a & 3b I_4 & 0_{4\times 4} & D^{\mss\mu 0 }_\nu & D^{\mss\mu 1 }_\nu & D^{\mss\mu 2 }_\nu & D^{\mss\mu 3 }_\nu & 0_{4\times 1} & 0_{4\times 4} \\
-\frac{a}{\upchi} & \frac{3b}{\uplambda}I_4 & -3b I_4 & \updelta^0_\nu a & \updelta^1_\nu a & \updelta^2_\nu a & \updelta^3_\nu a & 0_{4\times 1} & 0_{4\times 4} \\
0_{4\times 1} & 0_{4\times 4} & -a_0 I_4 & u^\alpha I_4 & 0_{4\times 4} & 0_{4\times 4} & 0_{4\times 4} & 0_{4\times 1} & 0_{4\times 4} \\
0_{4\times 1} & 0_{4\times 4} & -a_1 I_4 & 0_{4\times 4} & b I_4 & 0_{4\times 4} & 0_{4\times 4} & 0_{4\times 1} & 0_{4\times 4} \\
0_{4\times 1} & 0_{4\times 4} & -a_2 I_4 & 0_{4\times 4} & 0_{4\times 4} & b I_4 & 0_{4\times 4} & 0_{4\times 1} & 0_{4\times 4} \\
0_{4\times 1} & 0_{4\times 4} & -a_3 I_4 & 0_{4\times 4} & 0_{4\times 4} & 0_{4\times 4} & b I_4 & 0_{4\times 1} & 0_{4\times 4} \\
0 & 0_{1\times 4} &  0_{1\times 4}  & 0_{1\times 4} & 0_{1\times 4} & 0_{1\times 4} &  0_{1\times 4}  & \frac{b}{\uptheta} & \frac{\Xi^T}{3} \\
0_{4\times 1} & 0_{4\times 4} & 0_{4\times 4} & 0_{4\times 4} & 0_{4\times 4} & 0_{4\times 4} & 0_{4\times 4} & \frac{a}{\uptheta} & bI_4 \\
\end{bmatrix}
\nonumber
\\
& = m_1 m_2,
\nonumber
\end{align}
where we write $\Xi^T$ to emphasize that 
$\Xi^T$ represents a $1\times 4$ piece, and
$D_\nu^{\mss\alpha \mu}  = B_\nu^{\mss\alpha \mu\lambda} \Xi_\lambda$.
$m_2$ is given by
\begin{align}
\begin{split}
m_2 =&
\det
\begin{bmatrix}
\frac{b}{\uptheta} & \frac{\Xi^T}{3} 
\\
 \frac{a}{\uptheta} & bI_4
\end{bmatrix}
\\
& = 
\frac{b^3}{3\uptheta}(3b^2-\Proj_{\alpha\beta}\Xi^\alpha\Xi^\beta),
\end{split}
\nonumber
\end{align}
whereas 
\begin{align}
m_1 &= 
\det
\begin{bmatrix}
b & \Xi^T & 0_{1\times 4} & 0_{1\times 4} & 0_{1\times 4} & 0_{1\times 4} & 0_{1\times 4}\\
a & 3b I_4 & 0_{4\times 4} & D^{\mss\mu 0}_\nu & D^{\mss\mu 1}_\nu & D^{\mss\mu 2}_\nu & D^{\mss\mu 3}_\nu 
\\
-\frac{a}{\upchi} & \frac{3b}{\uplambda}I_4 & -3b I_4 & a \updelta^0_\nu  & a \updelta^1_\nu & a \updelta^2_\nu & a \updelta^3_\nu
 \\
0_{4\times 1} & 0_{4\times 4} & -a_0 I_4 & b I_4 & 0_{4\times 4} & 0_{4\times 4} & 0_{4\times 4}
\\
0_{4\times 1} & 0_{4\times 4} & -a_1 I_4 & 0_{4\times 4} & b I_4 & 0_{4\times 4} & 0_{4\times 4}
\\
0_{4\times 1} & 0_{4\times 4} & -a_2 I_4 & 0_{4\times 4} & 0_{4\times 4} & b I_4 & 0_{4\times 4}
\\
0_{4\times 1} & 0_{4\times 4} & -a_3 I_4 & 0_{4\times 4} & 0_{4\times 4} & 0_{4\times 4} & b I_4
\end{bmatrix}\label{line1}
\\
& = 
b^{9}\det
\begin{bmatrix}
1 & \Xi^T & 0_{1\times 4} \\
a & 3b^2 I_4 & D^{\mss \mu\alpha}_\nu a_\alpha \\
-\frac{a}{\upchi} & \frac{3b^2I_4}{\uplambda} & -3b^2 I_4+a^\mu a_\nu 
\end{bmatrix}\label{line2}
\\
& =
b^9
\det
\begin{bmatrix}
3b^2 I_4 & a & E^{\mu}_\nu \\
\Xi^T & 1 & 0_{1\times 4} \\
\frac{3b^2}{\uplambda}I_4 & -\frac{a}{\upchi} & -3b^2 I_4+a^\mu a_\nu 
\end{bmatrix}\label{line3}
\\
& =
27b^{15}\det
\begin{bmatrix}
3b^2-a^\mu a_\mu & -\Xi_\mu E^\mu_\nu \\
\frac{\uplambda+\upchi}{\uplambda\upchi}a & 3b^2 I_4-a^\mu a_\nu+\frac{E^\mu_\nu}{\uplambda} 
\end{bmatrix}\label{line4}
\\
& =
27b^{15}\det
\begin{bmatrix}
F & d_\nu \\
c^\mu & h^\mu_\nu
\end{bmatrix}=\frac{27b^{15}}{F^3}\det(
Fh^\mu_\nu -c^\mu d_\nu  )\label{line5}
\\
&=\frac{27b^{15}}{F^3}\det(
FG\updelta^\mu_\nu -H_\nu a^\mu )=27b^{15}G^3(
FG -H_\mu a^\mu )\label{line6}
\\
&=27b^{15}G^3(
FG-\frac{\uplambda+\upeta}{\uplambda}F(a^ \mu a_\mu) -\upkappa )\label{line7}.
\end{align}
We now detail how the computations \eqref{line1}-\eqref{line7} were carried
out. These computations made successive use of the formula
\begin{align}
\det 
\begin{bmatrix}
M_1 & M_2 \\
M_3 & M_4
\end{bmatrix}
&=\det(M_1) \det(M_4 - M_3 M_1^{-1} M_2 )\label{id1}\\
&=\det(M_4) \det(M_1 - M_2 M_4^{-1} M_3)\label{id2}
\end{align}
when $M_1^{-1}$ or $M_4^{-1}$ exist, and we defined
\begin{align}
\begin{split}
E^\mu_\nu&=-3\upeta
( a^\alpha a_\alpha \updelta^\mu_\nu+a^\mu\Xi_\nu-\frac{2}{3}a^\mu a_\nu ),
\\
F &= 3b^2 - a^\mu a_\mu,
\\
d_\nu &=-2\upeta a^\alpha a_\alpha (a_\nu-3\Xi_\nu),
\\
c^\mu &=\frac{\uplambda+\upchi}{\uplambda\upchi}a^\mu,
\\
h^\mu_\nu &= 3b^2 \updelta^\mu_\nu-a^\mu a_\nu+\frac{E^\mu_\nu}{\uplambda}
\\
&=3 (b^2-\frac{a^\alpha a_\alpha \upeta}{\uplambda})\updelta^\mu_\nu-\frac{\uplambda-2\upeta}{\uplambda}a^\mu a_\nu
-\frac{3\upeta}{\uplambda}a^\mu\Xi_\nu,
\\
G &= 3(b^2-\frac{ a^\alpha a_\alpha \upeta}{\uplambda}),
\\
H_\nu&=F(\frac{\uplambda-2\upeta}{\uplambda}a_\nu+\frac{3\upeta}{\uplambda}\Xi_\nu)+\frac{\uplambda+\upchi}{\uplambda\upchi}d_\nu,
\\
H_\mu a^\mu&=\frac{\uplambda+\upeta}{\uplambda}F(a^ \mu a_\mu) +\upkappa\\
\upkappa &= c^\mu d_\mu=\frac{4\upeta(\uplambda+\upchi)}{\uplambda\upchi}(a^\alpha a_\alpha)^2.
\end{split}
\label{def1}
\end{align}
From \eqref{line1} to \eqref{line2} we used \eqref{id1} by setting 
\begin{align}
\nonumber
M_1=\begin{bmatrix} b & \Xi^T & 0_{1\times 4} \\
a & 3b I_4 & 0_{4\times 4} \\-\frac{a}{\upchi} & \frac{3b}{\uplambda}I_4 & -3b I_4 
\end{bmatrix}
\end{align}
with $M_2,\,M_3,$ and $M_4$ following accordingly. Although $\det(M_4)=b^{16}$, we multiplied lines 2 to 9 by $b$ and divided column 1  by $b$. Then, the overall multiplicative factor was modified by $b^{16}b^8b^{-1}=b^9$, resulting in \eqref{line2}. After that, we performed the following permutations in \eqref{line2}: the fifth line was brought to the first line after 4 line permutations and the fifth column became the first column after 4 column permutations, obtaining \eqref{line3}, where $E^\mu$ was defined in \eqref{def1}. From \eqref{line3} to \eqref{line4} we made again use of \eqref{id1} by setting $M_1=3b^ 2 I_4$, where $M_2,\,M_3$, and $M_4$ are chosen accordingly. The resulting matrix has the overall factor multiplied by $\det M_1=81b^{8}$, but since we multiplied the first line of the resulting matrix by $3b^ 2$, it reduces to $27 b^6$ and, then, by changing the sign of the last 4 lines, Eq.\ \eqref{line4} is obtained. The first equality in \eqref{line5} corresponds to \eqref{line4} with the definitions that appear in \eqref{def1}. In the second equality it was applied \eqref{id2} with $M_1=F$, where $M_2,\,M_3$, and $M_4$ are chosen accordingly. The $F^{-3}$ factor appears as we multiplied all lines by $F$, then $\det(M_1)F^{-4}=F^{-3}$. The first equality of \eqref{line6} corresponds to the second equality in \eqref{line5} by using the definitions in \eqref{def1}. From the first to the second equality in \eqref{line6}, we used the formula
\begin{align}
\det(A\updelta^ \mu_\nu+\alpha^ \mu\beta_\nu)=A^4+A^3 \alpha^\mu\beta_\mu
\nonumber
\end{align}
with $A=FG$, $\alpha^\mu=a^\mu$, and $\beta_\nu=-H_\nu$. Finally,
\begin{align}
\det(\Xi_\alpha \mathcal{A}^\alpha)=m_1 m_2=
\frac{9b^{18}}{\uptheta}G^3(3b^2 - a^\mu a_\mu)
(FG-F\frac{\uplambda+\upeta}{\uplambda}a^\mu a_\mu -\upkappa ).
\nonumber
\end{align}

We set $\det(\Xi_\alpha \mathcal{A}^\alpha)$ equal to zero to find the eigenvalues and 
eigenvectors. Thus, we need to find the roots $\Lambda$ of
$b = 0$ with multiplicity $18$, $G=0$ (which gives a total of two roots with multiplicity $3$), $3b^2 - a^\mu a_\mu$ 
(which gives a total of $2$ roots with multiplicity 1), and 
$FG-F\frac{\uplambda+\upeta}{\uplambda}a^\mu a_\mu -\upkappa = 0$
(which gives a total of $4$ roots with multiplicity 1), and the corresponding eigenvectors in all cases.

\vskip 0.2cm
\underline{$b^{18}=0$} gives
\begin{align}
\Lambda_1 &= -\frac{u^\alpha \upzeta_\alpha}{u^\beta \upxi_\beta}. 
\nonumber
\end{align}
There are $18$ corresponding linearly independent eigenvectors given by
\begin{align}
\begin{split}
\begin{bmatrix}
0 \\ w_a^\nu \\ 0_{25\times 1}
\end{bmatrix},
\,
\begin{bmatrix}
 0_{26\times 1}\\ w_a^\nu
\end{bmatrix},
\,  
\begin{bmatrix}
\upchi f_\lambda^\lambda \\ 0_{8\times 1}\\ f_0^\nu \\ f_1^\nu \\ f_2^\nu \\ f_3^\nu\\ 0_{5\times 1}
\end{bmatrix},
\end{split}
\nonumber 
\end{align}
where $w_a^\nu=\{w_1^\nu=u^\nu,w_2^\nu,w_3^\nu\}$ are 3 linearly independent 
vectors orthogonal to $\upzeta_\lambda+\Lambda_1 \upxi_\lambda$, and 
$f_\lambda^\nu$ totalizes $16$ components that define the entries in the last vector. However, since these $16$ components are constrained by the $4$ 
equations $\upchi f^\lambda_\lambda a^\mu+D^{\mss \mu\lambda}_\nu f_\lambda^\nu=0$
(where $a^\alpha$ is as above but with $\Lambda=\Lambda_1$), we end up with 12 independent entries. Then, $3+3+12=18$, which equals the multiplicity of the root $\Lambda_1$.

\vskip 0.2cm
\underline{$3b^2 - a^\mu a_\mu=0$} can be written as 
$b^2 - \upbeta  a^\mu a_\mu = 0$, where 
$\upbeta = \frac{1}{3}$. The roots are then $\Lambda_{2,\pm}  =(-u^\mu \upzeta_\mu u^\nu \upxi_\nu+\upbeta\Proj^{\mu\nu}\upxi_\mu\upzeta_\nu \pm\sqrt{\Delta})/((u^\mu \upxi_\mu)^2(1-\upbeta)-\upbeta \upxi^\mu\upxi_\mu)$, where
\begin{align}
\begin{split}
\Delta&=  \upbeta (((u^\mu\upxi_\mu)^2-\Proj^{\mu\nu}\upxi_\mu\upxi_\nu)(\Proj^{\alpha\beta}\upzeta_\alpha\upzeta_\beta-(u^\alpha\upzeta_\alpha)^2)+(u^\mu\upxi_\mu u^\nu\upzeta_\nu+\Proj^{\mu\nu}\upxi_\mu\upzeta_\nu)^2\\
&+(1-\upbeta)(\Proj^{\mu\nu}\upxi_\mu\upxi_\nu \Proj^{\alpha\beta}\upzeta_\alpha\upzeta_\beta-(\Proj^{\mu\nu}\upxi_\mu\upzeta_\nu)^2)).
\end{split}
\nonumber
\end{align} 
We note that these roots are always real when $0 < \upbeta < 1$ because $\Proj_{\alpha\beta}\upxi^\alpha\upxi^\beta<(\upxi_\alpha u^\alpha)^2$, $\Proj_{\alpha\beta}\upzeta^\alpha\upzeta^\beta>(\upzeta_\alpha u^\alpha)^2$, and $(\Proj^{\mu\nu}\upxi_\mu\upzeta_\nu)^2\le \Proj^{\mu\nu}\upxi_\mu\upxi_\nu \Proj^{\alpha\beta}\upzeta_\alpha\upzeta_\beta$. Thus, $\Lambda_{2,\pm}$ has two distinct roots giving two linearly independent eigenvectors.

\vskip 0.2cm
\underline{$G^3=0$} can also be written as $b^2 - \upbeta  a^\mu a_\mu = 0$, where 
$\upbeta = \frac{\upeta}{\uplambda}$. The roots are written the same way as $\Lambda_{2,\pm}$ with the particularity that now each one has multiplicity $3$. We note that these roots are real because $0 < \upbeta < 1$. The corresponding eigenvectors are
\begin{align}
\begin{bmatrix}
C_\pm\\ 
D_\pm^\nu\\ 
e_\pm^\nu \\ 
\frac{a_{0}^\pm e_\pm^\nu}{b_\pm}\\ 
\frac{a_{1}^\pm e_\pm^\nu}{b_\pm} \\
\frac{a_{2}^\pm e_\pm^\nu}{b_\pm} \\
\frac{a_{3}^\pm e_\pm^\nu}{b_\pm}\\ 
0_{5\times 1}
\end{bmatrix},
\nonumber
\end{align}
where $a_\pm$ is as $a$ above but with $\Lambda = \Lambda_{3,\pm}$,
$b_\pm$ is as $b$ above but with $\Lambda = \Lambda_{3,\pm}$ (so that
$b^2_\pm=\upbeta (a_\pm)^\mu(a_\pm)_\mu$),
\begin{align}
\begin{split}
C_\pm &=-\frac{\uplambda}{\uplambda+\upchi} ((2\uplambda+\upchi)
(e_\pm)^\mu (\Xi_\pm)_\mu -\frac{\uplambda}{3\upeta}(2\upeta+\upchi)
(a_\pm)^\mu (e_\pm)_\mu ),
\\
D^\mu_\pm &=\frac{\uplambda+\upchi}{3b_\pm^2\uplambda} (
(a_\pm)^\nu  (e_\pm)_\nu (a_\pm)^\mu -3b_\pm^2\upchi (e_\pm)^\mu-(e_\pm)^\nu D^{\mss\mu\lambda}_\nu (a^\pm)_\lambda ),
\end{split}
\nonumber
\end{align}
where $\Xi_\pm$ is as $\Xi$ above but with $\Lambda = \Lambda_{3,\pm}$,
and $e_\pm$ obeys the following constraint
\begin{align}
\frac{\uplambda+\upchi}{\uplambda\upchi}c_\pm b_\pm-\frac{3\upeta}{\uplambda} (\Xi_\pm)^\mu 
(e_\pm)_\mu +\frac{2\upeta-\uplambda}{\uplambda}(a_\pm)^\mu (e_\pm)_\mu=0.
\nonumber
\end{align}
Thus, the eigenvectors are written in terms of $3$ independent components of $e^\mu$ for each root, giving a total of $6$ eigenvectors.

\vskip 0.2cm
\underline{$FG-F\frac{\uplambda+\upeta}{\uplambda}a^\mu a_\mu -\upkappa = 0$}
can be written
as 
\begin{align}
9\uplambda \upchi b^4-6(\uplambda+2\upeta)\upchi a^\mu a_\mu b^2+\uplambda(\upchi-4\upeta)(a^\mu a_\mu)^2=0.
\nonumber
\end{align}
This is a quadratic equation for $b^2$ that has positive discriminant, i.e.,
\begin{align}
(a^\mu a_\mu)^2\upeta \upchi(\uplambda^2+\upeta\upchi+\uplambda\upchi)>0.
\nonumber
\end{align}
In order to obtain real roots $\Lambda$, we need
\begin{align}
0 < \frac{b^2}{a^\mu a_\mu }=\frac{2\upchi(\uplambda+\upeta)\pm\sqrt{\upeta \upchi(\uplambda^2+\upeta\upchi+\uplambda\upchi)}}{3\uplambda\upchi} \le 1.
\nonumber
\end{align}
This gives the condition
\begin{align}
2\upchi(\uplambda+\upeta)-\sqrt{\upeta \upchi(\uplambda^2+\upeta\upchi+\uplambda\upchi)} > 0,
\nonumber
\end{align}
which is satisfied in view of $\upchi > 4\uplambda$, and
\begin{align}
\frac{2\upchi(\uplambda+\upeta)+\sqrt{\upeta \upchi(\uplambda^2+\upeta\upchi+\uplambda\upchi)}}{3\uplambda\upchi}
\le 1,
\nonumber
\end{align}
which is satisfied in view of $\uplambda \ge \frac{3\upchi \uplambda}{\upchi - \uplambda}$.
We also observe that these four roots are distinct, so that we obtain four linearly independent 
eigenvectors.

Finally, we notice that condition $(i)$ can be verified upon setting $\upzeta=0$ in the above
computations.
\end{proof}

From the above Proposition, we immediately obtain:

\begin{corollary} \label{c:diagonalization}
Assume that 
$\upchi > 4 \upeta >0$ and that $\uplambda \ge \frac{3\upchi \upeta}{\upchi - \upeta}$.
Then, the system \eqref{E:Matrix_system_1st_order} can be written as
\begin{align}
\nabla_0 \mathbf{\Psi} + \tilde{\mathcal{A}}^i \nabla_i  \mathbf{\Psi}
 & = \tilde{\mathcal{R}},
 \label{E:First_order_system_unit_coefficient}
\end{align}
where $\tilde{\mathcal{A}}^i = (\mathcal{A}^0)^{-1} \mathcal{A}^i$
and $\tilde{\mathcal{R}} = -(\mathcal{A}^0)^{-1} \mathcal{R}$,
 and the eigenvalue
problem $(\tilde{\mathcal{A}}^i  \upzeta_i - \Lambda I )V = 0$ possesses
only real eigenvalues $\Lambda$ and a set of complete eigenvectors $V$.
\end{corollary}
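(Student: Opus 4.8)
The statement is a direct repackaging of the Proposition, so the plan is simply to unwind definitions with the specific choices indicated in the Remark. First I would note that the constant covector $\upxi=(1,0,0,0)$ is timelike for the Minkowski metric, so part (i) of the Proposition applies under the standing hypotheses $\upchi>4\upeta>0$ and $\uplambda\ge 3\upchi\upeta/(\upchi-\upeta)$; hence $\det\mathcal{A}^0=\det(\mathcal{A}^\alpha\upxi_\alpha)\neq 0$. Although $\mathcal{A}^0$ depends on the unknown $\mathbf{\Psi}$, its invertibility is pointwise under those hypotheses, so $(\mathcal{A}^0)^{-1}$ exists and left-multiplying \eqref{E:Matrix_system_1st_order} by it yields \eqref{E:First_order_system_unit_coefficient} with $\tilde{\mathcal{A}}^i=(\mathcal{A}^0)^{-1}\mathcal{A}^i$ and $\tilde{\mathcal{R}}=-(\mathcal{A}^0)^{-1}\mathcal{R}$, exactly as claimed.

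For the spectral assertion, fix $\upzeta=(\upzeta_1,\upzeta_2,\upzeta_3)\in\mathbb{R}^3$. If $\upzeta=0$ then $\tilde{\mathcal{A}}^i\upzeta_i=0$ and the claim is trivial, so assume $\upzeta\neq 0$ and regard it as the spacelike covector $(0,\upzeta_1,\upzeta_2,\upzeta_3)$. Then $\mathcal{A}^\alpha\upzeta_\alpha=\mathcal{A}^i\upzeta_i$, and for any scalar $\Lambda$,
\begin{align}
\nonumber
\mathcal{A}^\alpha(\upzeta_\alpha+\Lambda\upxi_\alpha)=\mathcal{A}^i\upzeta_i+\Lambda\mathcal{A}^0=\mathcal{A}^0\big(\tilde{\mathcal{A}}^i\upzeta_i+\Lambda I\big).
\end{align}
Since $\mathcal{A}^0$ is invertible, the eigenvalue problem $\mathcal{A}^\alpha(\upzeta_\alpha+\Lambda\upxi_\alpha)V=0$ of part (ii) is equivalent to $(\tilde{\mathcal{A}}^i\upzeta_i+\Lambda I)V=0$, i.e.\ to $\tilde{\mathcal{A}}^i\upzeta_i V=-\Lambda V$. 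Thus part (ii) says precisely that $\tilde{\mathcal{A}}^i\upzeta_i$ has only real eigenvalues—namely $-\Lambda$ for each $\Lambda$ produced there—together with a complete set of eigenvectors. Renaming $-\Lambda$ as $\Lambda$ gives exactly the assertion that $(\tilde{\mathcal{A}}^i\upzeta_i-\Lambda I)V=0$ admits only real $\Lambda$ and a complete set of $V$.

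I do not expect a genuine obstacle here: the only content is the bookkeeping of the substitutions $\upxi=(1,0,0,0)$, $\upzeta=(0,\upzeta_1,\upzeta_2,\upzeta_3)$, the sign flip $\Lambda\mapsto-\Lambda$ relating the two forms of the eigenvalue problem, and the degenerate case $\upzeta=0$. All the analytic work—the explicit factorization of $\det(\Xi_\alpha\mathcal{A}^\alpha)$ and the verification that the hypotheses on $\upchi$, $\uplambda$, $\upeta$ force every root $\Lambda$ to be real with enough eigenvectors—has already been carried out in the proof of the Proposition, and the corollary merely recasts it in the form \eqref{E:First_order_system_unit_coefficient}, which is the convenient starting point for the energy estimates of the next section.
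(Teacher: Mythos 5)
Your proposal is correct and matches the paper's route: the paper treats the corollary as an immediate consequence of the Proposition, obtained exactly as you do by taking $\upxi=(1,0,0,0)$, $\upzeta=(0,\upzeta_1,\upzeta_2,\upzeta_3)$, using part (i) to invert $\mathcal{A}^0$, and recasting part (ii) as the eigenvalue problem for $\tilde{\mathcal{A}}^i\upzeta_i$. Your extra bookkeeping (the sign flip $\Lambda\mapsto-\Lambda$ and the degenerate case $\upzeta=0$) is consistent with, and slightly more explicit than, what the paper leaves implicit.
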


\section{Energy estimates}\label{Section 4}

\subsection{Preliminaries}

We first set down some notations. 
Let $I=[0,T]$ for some $T>0$. 
We use $\mathscr{K}:\bR_+\to \bR_+$ to denote a continuous function which may vary from line to line. 
Similarly, $\mathscr{K}_I:\bR_+\to \bR_+$ denotes a continuous function depending on $I$.
Further, the notation $\fR$  always denotes a pseudodifferential operator ($\Psi$DO) whose mapping property may vary from line to line.  
We denote the $L^2$ based Sobolev space of order $r$ by $H^r$, with norm $\| \cdot \|_r$.

Due to the 
quasilinear nature of our equations, we will need to employ a pseudodifferential
calculus for symbols with limited smoothness. Such a calculus can be found 
in \cite{MarschallPSOSobolev,MarschallPSONonRegular,MarschallPSONonRegularCorrection},
to which we will refer frequently.
We denote the class of symbols on $\mathbb T^3$ of order $r$ with Sobolev regularity $k$ by $\cS^{r}_0(k,2)(\mathbb T^3)$. Given $a \in \cS^{r}_0(k,2)(\mathbb T^3)$, we denote the left quantization of $a$ by $Op(a)$ and the resulting space of $r$th order $\Psi$DO's by $OP \cS^r_0(k,2)$. For the reader's convenience, we recall the definition of these symbols and quantizations on $\mathbb R^3$ which then yield a $\Psi$DO calculus on any smooth closed manifold by the coordinate invariance of the definition and standard arguments (see \cite[Theorem 5.1, Corollary 5.2]{MarschallPSOSobolev}). 

\begin{definition}\cite{MarschallPSOSobolev}
Let $r\in \bR$ and $k>3/2$. Define $\cS^r_0(k,2)(\bR^3)=\cS^r_0(k,2)(\bR^3,\bC)$ to be the space of all symbols $a:\bR^3 \times \bR^3\to \bC$ such that for all spatial multi-indices $\vec{\alpha} = (\alpha_1,
\alpha_2,\alpha_3)$
\begin{align}
\begin{split}
|\partial^{\vec{\alpha}}_\upzeta a(x,\upzeta)| & \leq C_{\vec{\alpha}}(1+|\upzeta|)^{r-|\vec{\alpha}|},\\
\|\partial^{\vec{\alpha}}_\upzeta a(x,\upzeta)\|_{H^k} & \leq C_{\vec{\alpha}}(1+|\upzeta|)^{r -|\vec{\alpha}|}.
\end{split}
\nonumber
\end{align}
For a matrix-valued symbol $a:\bR^3 \times \bR^3\to \bC^{h\times l}$ with $h,l\in \mathbb{N}$, we say $a \in \cS^r_0(k,2)(\bR^3,\bC^{h \times l})$ if all the entries of $a$ belong to $\cS^r_0(k,2)(\bR^3)$.
The left quantization, $Op(a)$, of a symbol $a\in \cS^r_0(k,2)(\bR^3,\bC^{h\times l})$ is defined by
\begin{align}
Op(a)f(x):=\frac{1}{(2\pi)^n}\int_{\bR^3} e^{\ii x\cdot \upzeta} a(x,\upzeta)\hat{f}(\upzeta)\, d\upzeta
\nonumber
\end{align}
for $f\in \cS(\bR^3,\bC^l)$, the space of Schwartz functions in $\mathbb{R}^3$.
\end{definition}

Since we will be working exclusively on $\mathbb T^3$, we will simply write $\mathcal S^r_0(k,2)$ instead of $\cS^r_0(k,2)(\mathbb T^3)$, and we will not typically specify if the symbol is scalar or matrix valued since the context will be clear. The (flat) Laplacian on $\mathbb T^3$ is denoted by $\Delta$, and we define 
\begin{align}
\la \nabla \ra := (1 -\Delta)^{\frac 12},
\nonumber
\end{align} 
an element of $OP \cl S_0^1(k,2)$ for every $k \in \mathbb R$. Finally, we recall that 
\begin{align}
\| \cdot \|_{r} \simeq \| \la \nabla \ra^r \cdot \|_{0}. 
\nonumber
\end{align} 

\subsection{Main estimates}
We consider the linear system associated with 
\eqref{E:First_order_system_unit_coefficient}. Given $\tilde{v}$, we define the
operator $\mathcal{F}(\tilde{v})$ by
\begin{align}
\cF(\tilde{v})\tilde{u} = \partial_t \tilde{u} + \tilde{\cA}^i (\tilde{v}) \nabla_i \tilde{u},
\nonumber
\end{align}
where $\tilde{\cA}^i(\tilde{v})$ corresponds to the matrix
$\tilde{\cA}^i =(\cA^0)^{-1} \cA^i$ of Corollary \ref{c:diagonalization}, but with
the entries of the matrix computed using $\tilde{v}$.
Then the first order system~\eqref{E:Matrix_system_1st_order}, 
or, equivalently, the system \eqref{E:First_order_system_unit_coefficient}, can be written as
\begin{equation}
\label{1st_order_sys}
\cF(\tilde{u}) \tilde{u}   = \tilde{\cR}(\tilde{u}), \quad \tilde{u}(0)=\tilde{u}_0,
\end{equation}
where $\tilde{\cR}(\tilde{u})= -(\cA^0)^{-1}(r_1,\dots,r_6)^T$. Above and in
what follows, we make the following change of notation. 
We will use $\tilde{u}$ for a solution of \eqref{1st_order_sys} (and $\tilde{v}$ 
for the coefficients of the corresponding linear system) instead of $\mathbf{\Psi}$. 
This is because at this point we will think of a solution in abstract terms, i.e., 
as a map from a time interval to a suitable function space, and so we use a different
notation to highlight this point of view.

The goal of this section is to prove the following energy estimates. 
\begin{proposition}\label{p:energyest}
Let $r >  7/2$, $I \subset \bR$ and 
\begin{align}
\mathbb E_1(I) = C(I; H^r) \cap C^1(I; H^{r-1}).
\nonumber
\end{align}
There exist increasing functions $\tilde M, \omega: [0,\infty) \rightarrow (0,\infty)$ such that if $\tilde{u},\tilde{v} \in C^\infty(I \times \mathbb T^3)$ satisfy  
\begin{align}
\cF(\tilde{v}) \tilde{u} & = \tilde{\cR}(\tilde{v}),  \mbox{on } I \times \mathbb T^3,
\label{1st_order_sys_eq}
\end{align}
then for all $t \in I$, 
\begin{align}
\norm{\tilde{u}(t)}_r^2 \leq \tilde{M}(\| \tilde{v} \|_{L^\infty(I; H^{r-1})}) e^{ 
t\omega(\|\tilde{v} \|_{\bE_1(I)})} \left [\norm{\tilde{u}_0}_r^2 + \int_0^t \norm{\tilde{\cR}(\tilde{v}(s))}_r^2\, ds \right ] \label{Basic EE}
\end{align}
\end{proposition}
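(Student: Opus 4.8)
The plan is to establish the energy inequality \eqref{Basic EE} by the standard symmetrizer/pseudodifferential-commutator method adapted to the limited-regularity symbol calculus of Marschall, exploiting the diagonalizability from Corollary \ref{c:diagonalization}. First I would differentiate the equation \eqref{1st_order_sys_eq} to top order: apply $\la \nabla \ra^r$ to $\cF(\tilde v)\tilde u = \tilde{\cR}(\tilde v)$ and set $\tilde u_r := \la \nabla \ra^r \tilde u$, obtaining
\begin{align}
\partial_t \tilde u_r + \tilde{\cA}^i(\tilde v)\nabla_i \tilde u_r = \la \nabla \ra^r \tilde{\cR}(\tilde v) + [\tilde{\cA}^i(\tilde v)\nabla_i, \la \nabla \ra^r]\tilde u.
\nonumber
\end{align}
The commutator term is a priori of order $r$ (i.e.\ no gain), so a naive $L^2$ pairing loses a derivative; the fix is to construct a symmetrizer. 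Since $\tilde{\cA}^i(\tilde v)\upzeta_i$ has, by Corollary \ref{c:diagonalization}, only real eigenvalues and a complete set of eigenvectors for each $\upzeta$, there is a bounded, boundedly invertible, positive-definite symmetrizer symbol $\mathrm{Sym}(x,\upzeta) \in \cS^0_0(k,2)$ (homogeneous of degree $0$ in $\upzeta$, depending on $\tilde v$ through finitely many of its values, hence of Sobolev regularity governed by $\|\tilde v\|_{H^{r-1}}$) such that $\mathrm{Sym}\,(\tilde{\cA}^i \upzeta_i)$ is symmetric. I would then define the energy $E(t) := \langle Op(\mathrm{Sym})\tilde u_r(t), \tilde u_r(t)\rangle_{L^2}$, which by Gårding-type bounds satisfies $E(t) \simeq \|\tilde u(t)\|_r^2$ with constants controlled by $\tilde M(\|\tilde v\|_{L^\infty(I;H^{r-1})})$.

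Next I would differentiate $E(t)$ in time. The derivative produces three groups of terms: (a) $\langle (\partial_t Op(\mathrm{Sym}))\tilde u_r,\tilde u_r\rangle$, bounded by $\|\partial_t \tilde v\|_{L^\infty}$ times $\|\tilde u_r\|_0^2$, hence by $\omega(\|\tilde v\|_{\bE_1(I)})\|\tilde u\|_r^2$ since $\tilde v \in C^1(I;H^{r-1})$ and $r-1 > 5/2$ embeds into $C^1$; (b) the principal symmetrized transport term $-\langle Op(\mathrm{Sym})\tilde{\cA}^i\nabla_i \tilde u_r,\tilde u_r\rangle - \langle Op(\mathrm{Sym})\tilde u_r, \tilde{\cA}^i\nabla_i\tilde u_r\rangle$, whose leading part cancels by the symmetry $Op(\mathrm{Sym})\,Op(\tilde{\cA}^i\upzeta_i) = Op(\tilde{\cA}^i\upzeta_i)^* Op(\mathrm{Sym}) + (\text{order }0)$, with the order-zero error — arising from the symbol calculus remainder in $\cS^{-1+\epsilon}_0$ composed against the limited regularity of $\tilde v$ — bounded by $\mathscr{K}(\|\tilde v\|_{r-1})\|\tilde u\|_r^2$; and (c) the forcing and commutator contributions $\langle Op(\mathrm{Sym})(\la\nabla\ra^r\tilde{\cR}(\tilde v) + [\tilde{\cA}^i\nabla_i,\la\nabla\ra^r]\tilde u),\tilde u_r\rangle$. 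For (c), the commutator $[\tilde{\cA}^i\nabla_i,\la\nabla\ra^r]$ is, by the Kato–Ponce/Marschall commutator estimate in the $\cS^r_0(k,2)$ calculus, bounded $H^r \to L^2$ by $\mathscr{K}(\|\tilde v\|_{r-1})$ — this is exactly where $r-1 > 5/2$ (so $\nabla \tilde v \in L^\infty$, and the symbol seminorms of $\tilde{\cA}^i$ are controlled) is used — and $\|\la\nabla\ra^r\tilde{\cR}(\tilde v)\|_0 \lesssim \|\tilde{\cR}(\tilde v)\|_r$ with the Moser/Schauder estimate handling the composition of $\tilde v$ with the smooth (analytic) nonlinearity $\tilde{\cR}$. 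Collecting,
\begin{align}
\frac{d}{dt}E(t) \leq \omega(\|\tilde v\|_{\bE_1(I)})\, E(t) + \mathscr{K}(\|\tilde v\|_{L^\infty(I;H^{r-1})})\big(E(t) + \|\tilde{\cR}(\tilde v(t))\|_r^2\big).
\nonumber
\end{align}

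Then I would apply Grönwall's inequality to this differential inequality, absorb the constant $\mathscr{K}$ into a redefined increasing $\omega$, and translate back from $E(t)$ to $\|\tilde u(t)\|_r^2$ using the two-sided equivalence $E(t)\simeq \|\tilde u(t)\|_r^2$ (picking up the factor $\tilde M(\|\tilde v\|_{L^\infty(I;H^{r-1})})$ from the upper/lower symbol bounds on $\mathrm{Sym}$ and its inverse), which yields precisely \eqref{Basic EE}. A minor technical point I would address: one must check that $\mathrm{Sym}$ can be chosen depending smoothly on the coefficient matrix $\tilde{\cA}^i(\tilde v)$, uniformly on the (compact) range of $\tilde v$, so that its symbol seminorms and Sobolev norms are genuinely controlled by a continuous function of $\|\tilde v\|_{H^{r-1}}$; this uses that the eigenvalue problem in Corollary \ref{c:diagonalization} is solvable with a uniform spectral gap structure on compact sets of admissible states, together with the explicit eigenvector formulas from the proof of the Proposition.

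I expect the main obstacle to be the bookkeeping in step (b): controlling the order-zero remainder in the composition $Op(\mathrm{Sym})\circ Op(\tilde{\cA}^i\upzeta_i)$ within the Marschall calculus for symbols of only Sobolev regularity $k \approx r-1$, and verifying that this remainder genuinely maps $H^r \to L^2$ with norm a continuous function of $\|\tilde v\|_{H^{r-1}}$ rather than requiring more derivatives of $\tilde v$. This is delicate precisely because the symbol smoothness is limited — the standard smooth-coefficient symbol calculus would give a gain of a full derivative, but here one only gets a gain of $1-\epsilon$ derivatives paired against a loss from the rough coefficient, and one must check the budget closes at the threshold $r > 7/2$. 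Everything else — the Grönwall step, the forcing estimate, the time-derivative-of-symmetrizer term — is routine once the symmetrizer is in hand.
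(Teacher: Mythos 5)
Your proposal follows essentially the same route as the paper: diagonalize $\tilde{\cA}^i\upzeta_i$ using Corollary~\ref{c:diagonalization}, build an order-zero symmetrizer in the Marschall class $\cS^0_0(r,2)$ (the paper's symmetrizer is precisely $\cS^*\cS$, where $\cS$ conjugates $\tilde{\cA}$ to diagonal form, so your ``$\mathrm{Sym}$'' and the paper's agree at the symbol level), conjugate by $\la\nabla\ra^r$ to form the $H^r$ energy, differentiate in time, and close with Gr\"onwall; the budget of commutator/remainder losses at $r>7/2$ ($r-1>5/2$, giving Lipschitz coefficients) is exactly as you describe.

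The one point where you and the paper genuinely diverge is the coercivity of the energy, which you dispatch with ``Gårding-type bounds.'' In the smooth symbol calculus this is routine, but in the limited-regularity calculus a sharp Gårding inequality is not immediately at hand, and the paper avoids invoking it. Instead it fixes a lower bound $\cS^*\cS \geq C_0 I$, takes a pointwise \emph{symbol-level} square root $\cB = \sqrt{\cS^*\cS - \tfrac{C_0}{2}I}$ (showing $\cB\in\cS^0_0(r,2)$ via Cholesky), and defines $N_r = \la\nabla\ra^r\big(\tfrac{C_0}{2}I + \mathfrak{B}^*\mathfrak{B}\big)\la\nabla\ra^r$ with $\mathfrak{B}=Op(\cB)$. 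Since $\mathfrak{B}^*\mathfrak{B}\geq 0$ as an operator, the coercivity $(N_r\tilde u,\tilde u)\geq \tfrac{C_0}{2}\|\tilde u\|_r^2$ is immediate and needs no Gårding theorem; the price is that $Op(\cS^*\cS) - \tfrac{C_0}{2}I - \mathfrak{B}^*\mathfrak{B}$ must be shown to be a lower-order remainder (handled by \cite[Corollaries 3.4 and 3.6]{MarschallPSOSobolev}). If you do want to argue via Gårding directly, you would need to verify that the requisite version is available for symbols of Sobolev regularity $r-1$ in Marschall's framework, which is a nontrivial ingredient your sketch currently treats as given; the square-root construction is the cleaner way to close the argument at this regularity. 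The rest of your proposal --- the decomposition of $\tfrac{d}{dt}E$, the commutator estimates using $r>7/2$, the handling of $N_r'$ via the Fr\'echet derivative of the symmetrizer paired with $\partial_t\tilde v\in H^{r-1}$, the Gr\"onwall step --- matches the paper's proof.
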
 

\begin{proof}
For $\upzeta = \upzeta_i dx^i \in T^* \mathbb T^3$, let  $\tilde{\cA}=\tilde{\cA}(\tilde{v},\upzeta)=\tilde{\cA}^i(\tilde{v}) \upzeta_i$ and  $ \mathfrak{U} = Op(\tilde{\cA})$. 
From the discussion in Section~\ref{S:Diagonalization}, there exist a matrix $\cS=\cS(\tilde{v},\upzeta)$ and a diagonal matrix 
$\tilde{\mathcal{D}}=\tilde{\mathcal{D}}(\tilde{v},\upzeta)$ such that
\begin{align}
\cS \tilde{\mathcal{A}} =  \tilde{\mathcal{D}} \cS.
\nonumber
\end{align}
We put  $\mathfrak{S}:=Op(\cS)$ and $\tilde{\mathfrak{D}}:=Op(\tilde{\mathcal{D}})$.
Based on the expression of $\tilde{\cA}^i(\tilde{v})\upzeta_i$, it is not hard to see that all its entries belong to $\cS^1_0(r,2)$. Denote by $\Lambda_k=\Lambda_k(\tilde{v},\upzeta)$ all  the (distinct) eigenvalues of $\tilde{\mathcal{A}}$.
Note that $\partial_\upzeta^{\vec{\alpha}} \tilde{\cA}(\tilde{v},\upzeta)$ is homogeneous of degree $1-
|\vec{\alpha}| $ for $|\vec{\alpha}|\leq 1$ and $\partial_\upzeta^{\vec{\alpha}} \tilde{\cA}(\tilde{v},\upzeta)=0$ for $|\vec{\alpha}|> 1$. 
We thus infer that $\Lambda_k/|\upzeta|$ is homogeneous in $\upzeta$ of degree zero.

Because the map $[(\tilde{v},\upzeta) \mapsto \Lambda_k(\tilde{v},\upzeta)]\in C^\infty(H^r \times T^* \mathbb T^3 , H^r)$,  
it follows that 
\begin{align}
\| \Lambda_k(\tilde{v},\upzeta)\|_r   \leq C,\quad |\upzeta|=1
\nonumber
\end{align}
for some $C=C(\norm{\tilde{v}}_r)$.
By the homogeneity of $\Lambda_k/|\upzeta|$, we can derive that
\begin{align}
\| \Lambda_k(\tilde{v},\upzeta)\|_r   \leq C (1+|\upzeta|),
\nonumber
\end{align}
for all $\upzeta$ and some $C=C(\norm{\tilde{v}}_r)$. 
Differentiating the characteristic polynomial of $\tilde{\mathcal{A}}$ with respect to $\upzeta$ and using induction immediately yield
\begin{align}
\label{Symbol est 2}
\|\partial^{\vec{\alpha}}_\upzeta \Lambda_k(\tilde{v},\upzeta)\|_r   \leq C_{\vec{\alpha}} (1+|\upzeta|)^{1 -
|\vec{\alpha|}},
\end{align}
for all $\upzeta$ and some $C_{\vec{\alpha}}=C_{\vec{\alpha}}(\norm{\tilde{v}}_r)$. 
By Sobolev embedding, this implies that $\Lambda_k\in \cS^1_0(r,2) $ and thus
\begin{align}
\tilde{\mathfrak{D}} \in OP\cS^1_0(r,2).
\nonumber
\end{align}
The projection onto the eigenspace associated to the eigenvalue $\Lambda_k$ is given by
\begin{align}
\label{Projection}
P_k=P_k(\tilde{v},\upzeta)= \frac{1}{2\pi \ii}\int_{\gamma_k} (z - \tilde{\cA}(\tilde{v},\upzeta))^{-1}\, dz,
\end{align}
where $\gamma_k$ is a smooth contour enclosing only one pole $\Lambda_k$. 
Note that with properly chosen contours $\gamma_k$, we can always make the eigenvalues $\tilde{\Lambda}_i(z,\tilde{v},\upzeta)$ of $(z-\tilde{\cA}(\tilde{v},\upzeta))^{-1}$ satisfy   
\begin{align}
 \| \tilde{\Lambda}_i(z,\tilde{v},\upzeta) \|_r \leq C=C(\|\tilde{v} \|_r),\quad |\upzeta|\leq 1, \, z\in \gamma_k 
 \nonumber
\end{align}
for all $k$.
From the homogeneity of $\tilde{\cA}$ and $\Lambda_k$, we infer that $P_k$ is homogeneous of degree $0$ in $\upzeta$. 
Combining with  \eqref{Symbol est 2} and \eqref{Projection}, we can derive that 
\begin{align}
  \| P_k(\tilde{v},\upzeta) \|_{H^r} & \leq C=C(\|\tilde{v}\|_r) ,\quad |\upzeta|=1.
  \nonumber
\end{align}
In view of the homogeneity of $P_k(\tilde{v},\cdot)$, this implies for all $\upzeta$ 
\begin{align}
  \| P_k(\tilde{v},\upzeta) \|_{H^r} & \leq C=C(\|\tilde{v} \|_r).
  \nonumber
\end{align}
Note that, for a given pair of $(\tilde{v},\upzeta)$, we can choose the contour $\gamma_k$ in \eqref{Projection} to be fixed in a neighborhood of  $(\tilde{v},\upzeta)$.
Applying a similar argument to the $\upzeta$-derivatives of $P_k$ and using the homogeneity 
of $\partial_\upzeta^{\vec{\alpha}} \tilde{\cA} $, 
direct computations lead to
$P_k\in \cS^0_0(r,2)$. This implies that 
\begin{align}\label{sym est of S}
\mathcal{S}=\mathcal{S}(\tilde{v},\upzeta) \in \cS^0_0(r,2)
\end{align}
and thus
\begin{align}
\mathfrak{S}=\mathfrak{S}(\tilde{v}) \in OP\cS^0_0(r,2)
\nonumber
\end{align}
with norm  depending on $\norm{\tilde{v}}_r$.

Then it follows from  \cite[Corollary~3.4]{MarschallPSOSobolev} that
\begin{align}
\mathfrak{S} \mathfrak{U} = \tilde{\mathfrak{D} } \mathfrak{S} + \fR
\nonumber
\end{align}
with
\begin{align}
\fR \in \cL(H^s, H^s) ,\quad 1-r<s\leq r-2.
\nonumber
\end{align}
We write $\mathfrak{U}=\ii \mathfrak{A} \la \nabla \ra$.
Let $\cA=\cA(\upzeta) $ denote the symbol of $\mathfrak{A}$, i.e. $\cA=-\ii \tilde{\cA}/(1+|\upzeta|^2)^{\frac 12}$. 
Hence $\mathfrak{A} \in OP\cS^0_0(r,2)$.
Then there exists a $\Psi$DO $\mathfrak{D} $ with symbol $\mathcal{D} \in \cS^0_0(r,2)$ such that
\begin{align}
\cS  \mathcal{A} =   \mathcal{D}  \cS  
\nonumber 
\end{align}
and thus 
\begin{align}
\mathfrak{S} \mathfrak{A} =  \mathfrak{D}  \mathfrak{S} + \fR 
\nonumber
\end{align}
with 
\begin{align}
\label{Commutator 3}
\fR \in \cL(H^{s-1}, H^s) ,\quad 1-r<s\leq r-1.
\end{align}
We rewrite \eqref{1st_order_sys_eq} as 
\begin{align}
\partial_t \tilde{u} & =  \ii \mathfrak{A}(\tilde{v}) \la \nabla \ra \tilde{u} 
+\tilde{\cR}(\tilde{v}),
\nonumber
\end{align}
or 
\begin{align}
\partial_t \tilde{u} & = \mathfrak{U}(\tilde{v}) \tilde{u} +\tilde{\cR}(\tilde{v}).
\nonumber
\end{align}

Denote by $\cS^*$ the conjugate transpose matrix of $\cS$. We further set $\tilde{\mathfrak{S}}:=Op(\cS^*)$.
Note that  $  \tilde{\mathfrak{S}}=\tilde{\mathfrak{S}}(v)\in OP\cS^0_0(r,2)$.  
Since $\cS$ is homogeneous of degree $0$ in $\upzeta$, combining with the discussion in Section~\ref{S:Diagonalization}, we infer  that
\begin{align}
\tilde{u}^T \cS^*(v,\upzeta)  \cS(v,\upzeta) \tilde{u} \geq C_0 |\tilde{u}|^2
\nonumber
\end{align}
for some $C_0=C_0(\|\tilde{v}\|_\infty) >0$.
Let $\cB=\cB(\tilde{v},\upzeta)=\sqrt{\cS^*(\tilde{v},\upzeta)  \cS(\tilde{v},\upzeta) -\frac{C_0}{2}I}$ and $\mathfrak{B}=Op(\cB)$. 
Here $I$ is the identity matrix and,  for a positive definite  matrix $A$, $B=\sqrt{A}$ denotes the square-root matrix of $A$, i.e. $B^* B=A$.
It is not hard to conclude that $\cB\in \cS^0_0(r,2)$ via the Cholesky algorithm. 
Putting $\tilde{\mathfrak{B}}=Op(\cB^*)\in OP\cS^0_0(r,2)$, it follows from \cite[Corollaries~3.4 and 3.6]{MarschallPSOSobolev} that
\begin{align}
\fR=&\tilde{\mathfrak{S}}\circ \mathfrak{S} -\frac{C_0}{2}I - \mathfrak{B}^* \mathfrak{B} \\
\label{S4: commutator 0}
=&[(\tilde{\mathfrak{S}}\circ \mathfrak{S} -\frac{C_0}{2}I )-  \tilde{\mathfrak{B}} \circ \mathfrak{B}]  
+(\tilde{\mathfrak{B}} \circ \mathfrak{B}  - \tilde{\mathfrak{B}}   \mathfrak{B} )  
+( \tilde{\mathfrak{B}}   \mathfrak{B} - \mathfrak{B}^* \mathfrak{B} )  \in \cL(H^{s-1},H^s)  
\end{align}
for all $1-r<s< r.$
Define
\begin{align}
N_r(t) := \la \nabla \ra^r (\frac{C_0}{2}I  + \mathfrak{B}^* \mathfrak{B})\la \nabla \ra^r .
\nonumber
\end{align}
It is an immediately conclusion from its definition that
\begin{align}
\label{S4: Garding type}
(N_r(t) \tilde{u} , \tilde{u}) \geq \frac{C_0}{2} \| \tilde{u} \|_r^2.
\end{align}
We have 
\begin{align}
\begin{split}
N_r 
=& \la \nabla \ra^r (\frac{C_0}{2}I  + \mathfrak{B}^* \mathfrak{B} - \tilde{\mathfrak{S}}\circ \mathfrak{S} )\la \nabla \ra^r  
+ \la \nabla \ra^r (\tilde{\mathfrak{S}}\circ \mathfrak{S}-\tilde{\mathfrak{S}}  \mathfrak{S}) \la \nabla \ra^r\\ 
& +  \la \nabla \ra^r  ( \tilde{\mathfrak{S}}   - \mathfrak{S}^*      ) \mathfrak{S}\la \nabla \ra^r  + \la \nabla \ra^r  \mathfrak{S}^*  \mathfrak{S} \la \nabla \ra^r.
\end{split}
\nonumber
\end{align}
It follows from  \cite[Corollary 3.4]{MarschallPSOSobolev} that 
\begin{align}\label{S4: commutator 1}
\tilde{\mathfrak{S}}\circ \mathfrak{S}-\tilde{\mathfrak{S}}  \mathfrak{S} \in \cL(H^{s-1},H^s), \quad 1-r <s\leq r,
\end{align}
and from \cite[Corollary 3.6]{MarschallPSOSobolev}    that 
 \begin{align}\label{S4: commutator 2}
\tilde{\mathfrak{S}}   - \mathfrak{S}^* \in \cL(H^{s-1},H^s), \quad 1-r <s< r,
\end{align}

We compute
\begin{align}
\begin{split}
\frac{d}{dt} (N_r \tilde{u},  \tilde{u}) 
& = 
(N_r \frac{d}{dt} \tilde{u},  \tilde{u})
+
(N_r \tilde{u} , \frac{d}{dt} \tilde{u})
+ 
(N_r^\prime  \tilde{u},   \tilde{u})
\\
& 
= 
(N_r \mathfrak{U} \tilde{u},  \tilde{u})
+ 
(N_r  \tilde{\cR},  \tilde{u})
+
(N_r \tilde{u}, \mathfrak{U} \tilde{u})
+
(N_r \tilde{u}, \tilde{\cR})
+ 
(N_r^\prime  \tilde{u},   \tilde{u})
\\
&= 
( (N_r \mathfrak{U} + \mathfrak{U}^* N_r )\tilde{u},  \tilde{u})
+ 
(N_r  \tilde{\cR},  \tilde{u})
+
(N_r \tilde{u}, \tilde{\cR})
+ 
(N_r^\prime  \tilde{u},   \tilde{u}),
\end{split}
\nonumber
\end{align}
where ${}^\prime=\frac{d}{dt}$. 
We have
\begin{align}
\begin{split}
N_r \mathfrak{U} + \mathfrak{U}^* N_r  =& 
[\la \nabla \ra ^r (\frac{C_0}{2}I  + \mathfrak{B}^* \mathfrak{B}) \la \nabla \ra ^r  ] \ii \mathfrak{A}\la \nabla \ra  \\
& -\ii \la \nabla \ra  \mathfrak{A}^*  
[\la \nabla \ra ^r (\frac{C_0}{2}I  + \mathfrak{B}^* \mathfrak{B}) \la \nabla \ra ^r  ]
\\
 = &
\ii [\la \nabla \ra ^r (\frac{C_0}{2}I  + \mathfrak{B}^* \mathfrak{B}) \la \nabla \ra ^r\mathfrak{A}\la \nabla \ra \\
&-\la \nabla \ra  \mathfrak{A}^* \la \nabla \ra ^r (\frac{C_0}{2}I  + \mathfrak{B}^* \mathfrak{B}) \la \nabla \ra ^r ].
\end{split}
\nonumber
\end{align}

Note that $\la \nabla \ra ^r \in OP\cS^r_0(k,2)$ for any $k$. 
We can infer from \eqref{S4: commutator 0}, \eqref{S4: commutator 1} and \eqref{S4: commutator 2} that
\begin{align}
\begin{split}
N_r \mathfrak{U} =& \ii N_r \mathfrak{A}\la \nabla \ra \\
=& \ii \la \nabla \ra ^r \mathfrak{S}^* \mathfrak{S} \la \nabla \ra ^r\mathfrak{A}\la \nabla \ra  +\fR ,
\end{split}
\nonumber
\end{align}
where $\fR=\fR(v) \in \cL(H^r, H^{-r})$.

To estimate the first term in the second line, we first notice that   \cite[Corollary 3.4]{MarschallPSOSobolev} implies
\begin{align}
\mathfrak{b}=\mathfrak{b}(\tilde{v}):=[\la \nabla \ra ^r,\mathfrak{A}(v) ] \in \cL(H^{r-1}, H^0),
\nonumber
\end{align}
and again, its norm depends on  $\|\tilde{v}\|_r$. 
Thus we have that 
\begin{align}
\la \nabla \ra ^r \mathfrak{S}^* \mathfrak{S} \la \nabla \ra ^r\mathfrak{A}\la \nabla \ra  & =
\la \nabla \ra ^r \mathfrak{S}^* \mathfrak{S}\mathfrak{A}\la \nabla \ra   \la \nabla \ra ^r +
\fR,
\nonumber
\end{align}
where $\fR=\fR(\tilde{v}) \in \cL(H^r, H^{-r})$.
Now observe that by \eqref{Commutator 3}
\begin{align}
\begin{split}
\mathfrak{S}\mathfrak{A}\la \nabla \ra  & =   \mathfrak{D} \mathfrak{S}\la \nabla \ra  + \fR 
\\
& = \mathfrak{D}\la \nabla \ra  \mathfrak{S} + \fR,
\end{split}
\nonumber
\end{align}
where in   the second equality we used \cite[Corollary 3.4]{MarschallPSOSobolev}. 
Here and below the operator $\fR$ may vary from line to line, but all these $\fR$ satisfy
\begin{align}
\fR=\fR(\tilde{v}) \in \cL(H^s, H^s) \quad \text{for all }-r+1<s\leq r-1.
\nonumber
\end{align} 
Therefore,
\begin{align}
N_r \mathfrak{U}  & =
\ii\la \nabla \ra ^r \mathfrak{S}^* \mathfrak{D}\la \nabla \ra  \mathfrak{S}   \la \nabla \ra ^r +
\fR,
\nonumber
\end{align}
where $\fR=\fR(v) \in \cL(H^r, H^{-r})$ and its norm depends on  $\|\tilde{v}\|_r$.

We can carry out a similar analysis for the term $\mathfrak{U}^* N_r$. More precisely, first notice that
\begin{align}
\begin{split}
\mathfrak{U}^* N_r = & -\ii \la \nabla \ra  \mathfrak{A}^*    \la \nabla \ra ^r  \mathfrak{S}^*  \mathfrak{S} \la \nabla \ra ^r \\
&-\ii \la \nabla \ra  \mathfrak{A}^*[ \la \nabla \ra ^r (\frac{C_0}{2}I  + \mathfrak{B}^* \mathfrak{B} - \tilde{\mathfrak{S}}\circ \mathfrak{S} )\la \nabla \ra ^r \\
&  +  \la \nabla \ra ^r (\tilde{\mathfrak{S}}\circ \mathfrak{S}-\tilde{\mathfrak{S}}  \mathfrak{S}) \la \nabla \ra ^r ] \\
&  +\la \nabla \ra ^r ( \tilde{\mathfrak{S}}   - \mathfrak{S}^*      ) \mathfrak{S}\la \nabla \ra ^r.
\end{split}
\nonumber
\end{align}
Using \eqref{S4: commutator 0}, \eqref{S4: commutator 1}, \eqref{S4: commutator 2} and \cite[Theorem 2.4]{MarschallPSOSobolev}, we infer that the last three terms on the right-hand side belong to $ \cL(H^r, H^{-r})$.
As 
\begin{align}
-\ii \la \nabla \ra  \mathfrak{A}^*    \la \nabla \ra ^r  \mathfrak{S}^*  \mathfrak{S} \la \nabla \ra ^r= [\ii     \la \nabla \ra ^r  \mathfrak{S}^*  \mathfrak{S} \la \nabla \ra ^r \mathfrak{A} \la \nabla \ra   ]^*,
\nonumber
\end{align}
we  conclude that
\begin{align}
\begin{split}
N_r \mathfrak{U} + \mathfrak{U}^* N_r  =& 
\ii [\la \nabla \ra ^r \mathfrak{S}^* \mathfrak{D}\la \nabla \ra  \mathfrak{S}   \la \nabla \ra ^r\\
&-\la \nabla \ra ^r \mathfrak{S}^* \la \nabla \ra  \mathfrak{D}^* \mathfrak{S}\la \nabla \ra ^r ] + \fR_0,
\\
 = &
\ii \la \nabla \ra ^r \mathfrak{S}^*[  \mathfrak{D}\la \nabla \ra    
-  \la \nabla \ra  \mathfrak{D}^*  ]\mathfrak{S} \la \nabla \ra ^r + \fR_0,
\end{split}
\nonumber
\end{align}
where $\fR_0=\fR_0(\tilde{v}) \in \cL(H^r, H^{-r})$ with norm depending on  $\|\tilde{v}\|_r$.
The term in the parenthesis is bounded in $\cL(H^0)$ due to \cite[Corollary 3.6]{MarschallPSOSobolev}.  
We thus have
\begin{align}
\frac{d}{dt} (N_r \tilde{u},  \tilde{u}) 
 = &
\ii
(   \la \nabla \ra ^r [ \mathfrak{S}^* \mathfrak{D}\la \nabla \ra  \mathfrak{S}   
-\mathfrak{S}^* \la \nabla \ra  \mathfrak{D}^* \mathfrak{S} ] \la \nabla \ra ^r \tilde{u},  
\tilde{u})\\
& + (\fR_0 \tilde{u},  \tilde{u})
+ 
(N_r  \tilde{\cR},  \tilde{u})
+
(N_r \tilde{u}, \tilde{\cR})
+ 
(N_r^\prime  \tilde{u},   \tilde{u}). \label{eq:energy_der}
\end{align}
We have
\begin{align}
| \ii
(   \la \nabla \ra ^r [ \mathfrak{S}^* \mathfrak{D}\la \nabla \ra  \mathfrak{S}   
-\mathfrak{S}^* \la \nabla \ra  \mathfrak{D}^* \mathfrak{S} ] \la \nabla \ra ^r \tilde{u},  \tilde{u})| 
& 
\leq C_1 \norm{\tilde{u}}_r^2,
\nonumber
\end{align}
\begin{align}
|(\fR_0  \tilde{u}, \tilde{u})|
 \leq C_2 \norm{\fR_0 \tilde{u}}_{-r}
 \norm{ \tilde{u} }_{r}
  \leq C_2
  \norm{ \tilde{u} }_{r}^2,
  \nonumber
\end{align}
\begin{align}
|(N_r  \tilde{\cR},  \tilde{u})|
+
|(N_r \tilde{u}, \tilde{\cR})|
& \leq C_3 \norm{\tilde{u}}_r \norm{\tilde{\cR}}_r \leq C_4 \norm{\tilde{u}}_r^2 + \frac{1}{2} \norm{\tilde{\cR}}_r^2.
\nonumber
\end{align}
Here the constants $C_i$ all depend on $\|\tilde{v}\|_r$.
To estimate the last term in \eqref{eq:energy_der}, observe that
\begin{align}
N^\prime(t) = \la \nabla \ra ^r \partial [\mathfrak{B}^*(\tilde{v})  \mathfrak{B}(\tilde{v})]  
\tilde{v}^\prime \la \nabla \ra ^r .
\nonumber
\end{align}
Here $\partial$ stands for the Frech\'et derivative. 
From \eqref{Projection} and \eqref{sym est of S}, it is not hard to see that
\begin{align}
\partial \mathcal{B} (\tilde{v})\tilde{v}^\prime  \in \cS^0_0(r-1,2).
\nonumber
\end{align}
Hence \cite[Theorem~2.3]{MarschallPSOSobolev}  implies that
\begin{align}
\partial \mathfrak{B}(\tilde{v}) \tilde{v}^\prime = Op(\partial \mathcal{B} (\tilde{v})\tilde{v}^\prime  ) \in \cL(H^0).
\nonumber
\end{align}
As $\partial \mathfrak{B}^*(\tilde{v}) \tilde{v}^\prime =[\partial \mathfrak{B}(\tilde{v}) \tilde{v}^\prime]^*$, 
we immediate conclude that
\begin{align}
\partial [\mathfrak{B}^*(\tilde{v})  \mathfrak{B}(\tilde{v})]  \tilde{v}^\prime \in \cL(H^0).
\nonumber
\end{align}
Now it follows that
\begin{align}
|(N_r^\prime  \tilde{u},   \tilde{u})|
& \leq C_6  \norm{N_r^\prime  \tilde{u}}_{-r} \norm{\tilde{u}}_r  \leq C_6 
\norm{ \tilde{u}}_{r}^2,
\nonumber
\end{align}
where $C_6$   depends on $\|\tilde{v}\|_{\bE_1(I)}$. In summary, 
\begin{align}
\frac{d}{dt} (N_r \tilde{u},  \tilde{u})  \leq C_7 \norm{\tilde{u}}_r^2 + C_5\norm{\tilde{\cR}}_r^2
\label{EE inequality}
\end{align}
with $C_7=C_7(\|\tilde{v}\|_{\bE_1(I)})$.
As a direct conclusion from \eqref{S4: Garding type} and Gr\"onwall's inequality, we have
\begin{align}
\norm{\tilde{u}(t)}_r^2 \leq \tilde{M} e^{ t\omega(\|\tilde{v}\|_{\bE_1(I)})} \left [\norm{\tilde{u}_0}_r^2 + \int_0^t \norm{\tilde{\cR}(\tilde{v}(s))}_r^2\, ds \right ]
\nonumber
\end{align}
where $\tilde{M}$ is a constant argument depending on $\|\tilde{v}\|_\infty$, and thus, 
on $\|\tilde{v}\|_{r-1}$ by Sobolev embedding.
\end{proof}


\section{Local existence and uniqueness}
In this section, we use the energy estimate of Proposition \ref{p:energyest} to establish
local well-posedness for the system \eqref{E:Matrix_system_1st_order},
which in turn will imply Theorem \ref{T:main_theorem}.

\subsection{Approximating sequence}\label{Section 5.1}
We take a sequence of smooth initial data $\tilde{u}_{0,n} \to \tilde{u}_0$ in $H^r$ with $r> 7/2$.
Then we inductively study
\begin{align}
\cF(\tilde{u}_{n-1}) \tilde{u}_n   &= \tilde{\cR}(\tilde{u}_{n-1}), \quad 
\tilde{u}_n(0)=\tilde{u}_{0,n}.
\label{1st_order_sys_ind}
\end{align}
Let $\norm{\tilde{u}_0}_r^2 \leq K$. We may assume
\begin{align}
\label{asp on approx initial}
\norm{\tilde{u}_{0,n}}_r^2 \leq K+1.
\end{align}
Further, we define continuous functions $\mathscr{K}_i:\bR_+ \to \bR_+$ with $i=1,2$ such that
\begin{align}
\nonumber
\| \mathfrak{A}(\tilde{v}) \|_{\cL(H^{r-1})} \leq \mathscr{K}_1(\| \tilde{v} \|_r)
\end{align}
and
\begin{align}
\nonumber
\| \tilde{\cR}(\tilde{v}) \|_{s} \leq \mathscr{K}_2(\| \tilde{v} \|_s),\quad s=r-1,r.
\end{align}
We next make the inductive assumption
\begin{align}
\nonumber
H(n-1):  \norm{\tilde{u}_k}_{C(I;H^r)} \leq \cC_1 \, \text{ and } \, 
\|\partial_t \tilde{u}_k\|_{C(I;H^{r-1})}\leq \cC_2 \, \text{ for } \, k=1,2,\cdots, n-1.
\end{align}
Note that it follows from $H(n-1)$ and \eqref{asp on approx initial} that by choosing $T$ small enough, we have
\begin{align}
\nonumber
\norm{\tilde{v}_k(t)}_{r-1} \leq M,\quad  k=1,2,\cdots, n-1 \text{ and } t\in [0,T]
\end{align}
for some sufficiently large uniform constant $M$ independent of $\cC_i$.
As a direct consequence, we can take the constant $\tilde{M}$ in \eqref{Basic EE} to be uniform in the following iteration argument.

Furthermore, we choose $\cC_i$ in $H(n-1)$ large enough so that
\begin{align}
\sqrt{\tilde{M} (2K+4)} \leq \cC_1
\nonumber
\end{align}
and
\begin{align}
M^\prime \mathscr{K}_1(\cC_1)\cC_1 + \mathscr{K}_2(\cC_1)\leq \cC_2,
\nonumber
\end{align}
where $M^\prime=\| \la \nabla \ra \|_{\cL(H^r,H^{r-1})}$.
Now we will use \eqref{Basic EE} to estimate 
\begin{align}
\norm{\tilde{u}_n(t)}_r^2 \leq  &     \tilde{M} e^{ t\omega( \norm{\tilde{u}_{n-1}}_{\bE_1(I)})} [\norm{\tilde{u}_{0,n}}_r^2 + \int_0^t \norm{\tilde{\cR}(\tilde{u}_{n-1}(s))}_r^2\, ds] \\
\leq & \tilde{M} e^{ t\omega( \cC_1+\cC_2)} [K+1 + t \mathscr{K}_2(\cC_1)],
\nonumber
\end{align}
By choosing $T$ small enough, we can control
\begin{align}
\nonumber
\norm{\tilde{u}_n(t)}_r^2  \leq \tilde{M}(2K+4) \quad \text{for all }t\in [0,T],
\end{align}
which gives
\begin{align}
\nonumber
\norm{\tilde{u}_n}_{C(I;H^r)} \leq \cC_1.
\end{align}
We plug this estimate into \eqref{1st_order_sys_ind} and thus obtain
\begin{align}
\| \partial_t \tilde{u}_n(t)\|_{r-1} \leq M^\prime \mathscr{K}_1(\cC_1)\cC_1 + \mathscr{K}_2(\cC_1)\leq \cC_2.
\nonumber
\end{align}
This completes the verification of $H(n)$. One thus infers that
\begin{align}
\norm{\tilde{u}_n}_{\bE_1(I)} \leq \cC
\label{Bound EE}
\end{align}
for all $n$ and some $\cC>0$.

\subsection{Energy estimate for the difference of two solutions}
For $i=1,2$, we consider
\begin{align}
\nonumber
\cF(\tilde{v}_i) \tilde{w}_i   &= \tilde{\cR}(\tilde{v}_i), \quad \tilde{w}_i(0)=\tilde{w}_{0,i}.
\end{align}
Set $\tilde{v}=\tilde{v}_2-\tilde{v}_1$ and $\tilde{w}=\tilde{w}_2-\tilde{w}_1$. 
Taking the difference of the above two systems, we obtain
\begin{align}
\partial_t \tilde{w} &= \mathfrak{U}(\tilde{v}_2) \tilde{w}
 + [\mathfrak{U}(\tilde{v}_1)-\mathfrak{U}(\tilde{v}_2)] \tilde{w}_1 +\tilde{\cR}(\tilde{v}_2)-\tilde{\cR}(\tilde{v}_1)
 , \quad \tilde{w}(0)=\tilde{w}_{0,2}-\tilde{w}_{0,1}.
\label{1st_order_sys_diff}
\end{align}
Let
\begin{align}
\nonumber
\mathfrak{F}=[\mathfrak{U}(\tilde{v}_1)-\mathfrak{U}(\tilde{v}_2)] 
\tilde{w}_1 +\tilde{\cR}(\tilde{v}_2)-\tilde{\cR}(\tilde{v}_1)
\end{align}
and
\begin{align}
\nonumber
\bE_0(I):= C(I; H^{r-1})\cap C^1(I; H^{r-2}).
\end{align}
By \eqref{Basic EE}, we have
\begin{align}
\nonumber
\norm{\tilde{w}(t)}_{r-1}^2 \leq \tilde{M} e^{ t\omega} 
[\norm{\tilde{w}_{0,2}-\tilde{w}_{0,1}}_{r-1}^2 + \int_0^t \norm{\mathfrak{F}(s)}_{r-1}^2\, ds].
\end{align}
Here $\tilde{M}=\tilde{M}(\norm{\tilde{v}_2}_{r-2})$ and $\omega=\omega(\norm{\tilde{v}_2}_{\bE_0(I)})$.
We estimate
\begin{align}
\begin{split}
 \|[\mathfrak{U}(\tilde{v}_1)-\mathfrak{U}(\tilde{v}_2)] \tilde{w}_1\|_{r-1}  
\leq & \int_0^1 \| \partial \mathfrak{U} (s \tilde{v}_1 + (1-s)\tilde{v}_2)(\tilde{v}) 
\tilde{w}_1\|_{r-1} ds \\
\leq & \int_0^1 \| \partial \mathfrak{A} (s \tilde{v}_1 + (1-s)\tilde{v}_2)(\tilde{v}) 
\la \nabla \ra  \tilde{w}_1\|_{r-1} ds \\
\leq & \mathscr{K}(\norm{\tilde{v}_1}_{r-1} + \norm{\tilde{v}_2}_{r-1}) \norm{\tilde{v}}_{r-1} \|\tilde{w}_1\|_r.
\end{split}
\nonumber
\end{align}
Similarly,
\begin{align}
\nonumber
\|\tilde{\cR}(\tilde{v}_2)-\tilde{\cR}(\tilde{v}_1)\|_{r-1} \leq \mathscr{K}(\norm{\tilde{v}_1}_{r-1}+ \norm{\tilde{v}_2}_{r-1}) \norm{\tilde{v}}_{r-1}.
\end{align}
This yields
\begin{align}
\norm{\tilde{w}(t)}_{r-1}^2  
\leq  & \tilde{M} e^{ t\omega } [\norm{\tilde{w}_{0,2}-\tilde{w}_{0,1}}_{r-1}^2 \\
& + t (1+\|\tilde{w}_1\|_r^2) \mathscr{K}(\norm{\tilde{v}_1}_{r-1} + \norm{\tilde{v}_2}_{r-1}) \norm{\tilde{v}}_{r-1}^2 ] .
\label{EE_diff_1}
\end{align}
Using \eqref{1st_order_sys_diff}, we further have
\begin{align}
\|\partial_t \tilde{w}\|_{r-2}  \leq \mathscr{K}(\norm{\tilde{v}_2}_{r-2})\norm{\tilde{w}}_{r-1} 
 +  \|\mathfrak{F}\|_{r-2} .
\label{EE_diff_2}
\end{align}

\subsection{Convergence}
Now we choose $\tilde{v}_2=\tilde{w}_1=\tilde{u}_{n-1}$, $\tilde{v}_1=\tilde{u}_{n-2}$ and $\tilde{w}_2=\tilde{u}_n$. 
Note that as in Section~\ref{Section 5.1}, the constant $\tilde{M}$ in \eqref{EE_diff_1} can be taken to be independent of $n$.
\eqref{EE_diff_1} and \eqref{EE_diff_2} show that
\begin{align}
\begin{split}
&\|\tilde{u}_n-\tilde{u}_{n-1}\|_{\bE_0(I)}\\
  \leq & \sqrt{\tilde{M}} e^{\frac{T}{2} \omega(\cC)}[\| \tilde{u}_{0,n}-\tilde{u}_{0,n-1}\|_{r-1}+ \sqrt{T}(1+\cC)\mathscr{K}_I(\cC)  \|\tilde{u}_{n-1}-\tilde{u}_{n-2}\|_{\bE_0(I)} ]\\
& + \mathscr{K}_I(\cC)\sqrt{\tilde{M}} e^{\frac{T}{2} \omega(\cC)}[\| \tilde{u}_{0,n}-\tilde{u}_{0,n-1}\|_{r-1}+ \sqrt{T}(1+\cC)\mathscr{K}_I(\cC)  \| \tilde{u}_{n-1} - \tilde{u}_{n-2} \|_{\bE_0(I)} ]\\
&+ \mathscr{K}_I(\cC) \sup\limits_{t\in I} \|\tilde{u}_{n-1}(t)-\tilde{u}_{n-2}(t)\|_{r-2}.
\end{split}
\nonumber
\end{align}
In the last line, we can use \eqref{EE_diff_1}  once more to obtain
\begin{align}
\begin{split}
&\sup\limits_{t\in I} \|\tilde{u}_{n-1}(t)-\tilde{u}_{n-2}(t)\|_{r-1} \\
\leq & \sqrt{\tilde{M}} e^{\frac{T}{2} \omega(\cC)}[\|\tilde{u}_{0,n-1}-\tilde{u}_{0,n-2}\|_{r-1}+ \sqrt{T}(1+\cC)\mathscr{K}_I(\cC)  \|\tilde{u}_{n-2}-\tilde{u}_{n-3}\|_{\bE_0(I)} ]
\end{split}
\nonumber
\end{align}
We can choose $T$ small and $(\tilde{u}_{0,n})$ in such a way that
\begin{align}
\nonumber
(1+\mathscr{K}_I(\cC))\sqrt{\tilde{M}} e^{\frac{T}{2} \omega(\cC)}(\|\tilde{u}_{n-1}(t)-\tilde{u}_{n-2}(t)\|_{r-1} + \| \tilde{u}_{0,n-1}-\tilde{u}_{0,n-2}\|_{r-1} ) \leq 2^{-n},
\end{align}
\begin{align}
\nonumber
\sqrt{\tilde{M}} e^{\frac{T}{2} \omega(\cC)}\sqrt{T}(1+\cC)\mathscr{K}_I^2(\cC)  \leq 1/16,
\end{align}
and
\begin{align}
\nonumber
\sqrt{\tilde{M}} e^{\frac{T}{2} \omega(\cC)}\sqrt{T}(1+\cC)(\mathscr{K}_I^2(\cC) +\mathscr{K}_I(\cC) )  \leq 1/4.
\end{align}
Putting $a_n=\|\tilde{u}_n-\tilde{u}_{n-1}\|_{\bE_0(I)}$. We thus infer that
\begin{align}
\nonumber
a_n \leq 2^{-n} + a_{n-1}/4 + a_{n-2}/16.
\end{align}
By induction, one can show that
\begin{align}
a_n \leq \frac{s_n}{2^{2n-3}} + \frac{F_n}{2^{2n-4}}a_2 + \frac{F_{n-1}}{2^{2n-2}} a_1,
\label{Inductive: a_n}
\end{align} 
where $F_n$ is the $n-$th term of Fibonacci sequence (starting from $0$) and
\begin{align}
\nonumber
s_n=2^{n-3} + s_{n-1}+s_{n-2}.
\end{align}
Let $b=(1-\sqrt{5})/2$. Then
\begin{align}
s_n - b s_{n-1} =& 2^{n-3} + (1-b)(s_{n-1} - b s_{n-2})
\nonumber
\\
b(s_{n-1} - b s_{n-2}) =& 2^{n-4} b  + b(1-b)(s_{n-2} - b s_{n-3})
\nonumber
\\
& \vdots\\
b^{n-3}(s_3 - b s_2) =&  b^{n-3}  + b^{n-3}(1-b)(s_2 - b s_1).
\nonumber
\end{align}
We sum these expressions to conclude
\begin{align}
\nonumber
s_n - b^{n-2} s_2 = \sum_{k=0}^{n-3} 2^k b^{n-3-k} + (1-b)(s_{n-1}  -b^{n-2} s_1).
\end{align}
We perform a similar computation and sum to obtain
\begin{align}
s_n - (1-b) s_{n-1} =& \sum_{k=0}^{n-3} 2^k b^{n-3-k} + b^{n-2} s_2  - (1-b)b^{n-2} s_1 
\nonumber
\\
(1-b)( s_{n-1} - (1-b) s_{n-2} )  =& (1-b)\sum_{k=0}^{n-2} 2^k b^{n-2-k} + (1-b)b^{n-3} s_2  - (1-b)^2 b^{n-3} s_1 
\nonumber
\\
& \vdots
\nonumber
\\
(1-b)^{n-3}( s_3 - (1-b) s_2 )  =& (1-b)^{n-3}  + (1-b)^{n-3}b s_2  - (1-b)^{n-2} b s_1 
\end{align}
This yields
\begin{align}
s_n - (1-b)^{n-2} s_2 =&[\sum_{k=0}^{n-3} 2^k b^{n-3-k} + (1-b)\sum_{k=0}^{n-2} 2^k b^{n-2-k} + (1-b)^{n-3}] 
\nonumber
\\
&+ s_2\sum_{k=1}^{n-2}b^k (1-b)^{n-2-k} + s_1\sum_{k=1}^{n-2}b^{n-1-k} (1-b)^k
\nonumber
\end{align}
and thus
\begin{align}
\nonumber
s_n \leq (n-2) 2^{n-2} + 2^{n-2} s_2 + 2^{n-1} s_1.
\end{align}
Plug this expression into \eqref{Inductive: a_n}. We infer 
\begin{align}
\nonumber
a_n \leq \frac{n-2}{2^{n-1}} + \frac{s_1}{2^{n-1}} + \frac{s_1}{2^{n-1}} + \frac{a_2}{2^{n-4}} +\frac{a_1}{2^{n-2}}.
\end{align}
Then
\begin{align}
\nonumber
\|\tilde{u}_n-\tilde{u}_{n+j}\|_{\bE_0(I)} \leq \|\tilde{u}_n-\tilde{u}_{n+1}\|_{\bE_0(I)}+\cdots \|\tilde{u}_{n+j-1}-\tilde{u}_{n+j}\|_{\bE_0(I)}
\end{align}
can be made arbitrarily small by taking $n$ large.
We  conclude that $(\tilde{u}_n)$ is Cauchy in $C(I; H^{r-1})\cap C^1(I; H^{r-2})$  and thus converges in this space. 

We denote the limit by $\tilde{u}\in C(I; H^{r-1})\cap C^1(I; H^{r-2})$. We can let 
$n\to \infty$ in \eqref{1st_order_sys_ind} and thus $\tilde{u}$ satisfies
\begin{align}
\cF(\tilde{u}) \tilde{u}   &= \tilde{\cR}(\tilde{u}), \quad \tilde{u}(0)=\tilde{u}_0.
\nonumber
\end{align}
Next, notice that it follows from \eqref{Bound EE} that
\begin{align}
\norm{\tilde{u}(t)}_r + \| \partial_t \tilde{u}(t)\|_{r-1} \leq \cC,\quad t\in [0,T].
\nonumber
\end{align}
We remark that since we have an estimate for the difference of two solutions,
uniqueness also follows from the above arguments.

\subsection{Continuity of solution}
The weak continuity of the solution $\tilde{u}$ can be proved by a similar argument to that of 
quasilinear wave equations, since in that proof the structure of the equation is not necessary but only the convergence $\tilde{u}_n\to \tilde{u} $ in $C(I; H^{r-1})\cap C^1(I; H^{r-2})$ and an estimate of the form~\eqref{Bound EE} matter.

We put
\begin{align}
\nonumber
\mathfrak{K}(t)=\sqrt{\frac{C_0}{2}I + (\mathfrak{B}(\tilde{u}(t)))^* \mathfrak{B}(\tilde{u}(t)) }
\end{align}
and
\begin{align}
\nonumber
\cA_r(t)=\cA_r(\tilde{u}(t))=  \mathfrak{K}(t) \la \nabla \ra ^r.
\end{align}
Hence
\begin{align}
\nonumber
N_r(t)=N_r(\tilde{u}(t)) = \cA_r(\tilde{u}(t))^*  \cA_r(\tilde{u}(t)).
\end{align}
Recall that $\mathfrak{B}\in OP \cS^0_0(r,2)$. 
It follows from \cite[Theorems~2.2 and 2.4]{MarschallPSOSobolev} that
\begin{align}
\label{S5: mapping K}
\mathfrak{K}(t) \in \cL(H^s),\quad -r<s<r-1.
\end{align}
Fix $t_0\in [0,T]$, let us first show that $\cA_r(t_0) \tilde{u}(t)$ is weakly continuous in $H^0$. Given any $\epsilon>0$ and $\phi\in H^0$, take a sequence of Schwarts functions $\phi_j\to \phi $ in $H^0$. Then
\begin{align}
&(\cA_r(t_0) \tilde{u}(t)- \cA_r(t_0) \tilde{u}_n(t), \phi) 
\nonumber
\\
=& (\cA_r(t_0) \tilde{u}(t)- \cA_r(t_0) \tilde{u}_n(t), \phi - \phi_j) 
+ (\cA_r(t_0) \tilde{u}(t)- \cA_r(t_0) \tilde{u}_n(t), \phi_j)
\nonumber
\end{align}
The first term is bounded by
\begin{align}
\nonumber
|(\cA_r(t_0) \tilde{u}(t)- \cA_r(t_0) \tilde{u}_n(t), \phi - \phi_j)| \leq \mathscr{K}(\cC) \|\phi - \phi_j\|_0
\end{align}
in view of \eqref{Bound EE}. By choosing $j$ large enough, we can make 
this term less than $\epsilon/2$.
Then fixing $j$ in the second term, we have
\begin{align}
\begin{split}
& |(\cA_r(t_0) \tilde{u}(t)- \cA_r(t_0) \tilde{u}_n(t), \phi_j)| \\
= & |( \la \nabla \ra ^{r-1} ( \tilde{u}(t)-   \tilde{u}_n(t)),  
\la \nabla \ra   \mathfrak{K}^*(t) \phi_j)|
\end{split}
\nonumber
\end{align}
Since $\tilde{u}_n\to \tilde{u} $ in $C(I; H^{r-1})$, taking
into consideration \cite[Theorem~2.4]{MarschallPSOSobolev} and \eqref{S5: mapping K}, we have
\begin{align}
\nonumber
|(\cA_r(t_0) \tilde{u}(t)- \cA_r(t_0) \tilde{u}_n(t), \phi_j)| <\epsilon/2
\end{align}
for all $n \geq n_0$ with some large enough $n_0$. In sum,
\begin{align}
\nonumber
|(\cA_r(t_0) \tilde{u}(t)- \cA_r(t_0) \tilde{u}_n(t), \phi) | <\epsilon \quad \text{for all }n\geq n_0 \text{ and } t\in [0,T].
\end{align}
This shows that $\cA_r(t_0) \tilde{u}_n(t) $ converges to $\cA_r(t_0) \tilde{u}(t)$ uniformly in $t$ in the weak topology. Thus,
$\cA_r(t_0) \tilde{u}(t)$ is weakly continuous in $t$ with respect to the norm of $H^0$.

In the next step, we will show $  u (t) \in C(I; H^r)$. In view of the weak continuity of $  u (t)$, it suffices to demonstrate that the map 
\begin{align}
\nonumber
[t\mapsto \|   \tilde{u} (t) \|_r] \quad \text{is continuous.}
\end{align}
Applying \eqref{EE inequality} to \eqref{1st_order_sys} and in view of \eqref{Bound EE}, we infer that
\begin{align}
\frac{d}{dt}\| \cA_r(t) \tilde{u}(t)\|^2_0  \leq \mathscr{K}(\cC).
\label{Basic EE 2}
\end{align}
This implies that
\begin{align}\label{Lip cont}
 \| \cA_r(t) \tilde{u}(t)\|^2_0 =:Y(t) \quad \text{is Lipschitz continuous in } t.
\end{align}
Now consider
\begin{align}
& \| \cA_r(t_0) \tilde{u}(t)\|^2_0 - \| \cA_r(t_0) \tilde{u}(t_0)\|^2_0  
\nonumber
\\
=&   (\| \cA_r(t_0) \tilde{u}(t)\|^2_0 - \| \cA_r(t) \tilde{u}(t)\|^2_0   ) 
+  (\| \cA_r(t) \tilde{u}(t)\|^2_0 - \| \cA_r(t_0) \tilde{u}(t_0)\|^2_0   ).
\nonumber
\end{align}
The first term on the RHS can be estimated as follows.
\begin{align}
& |(\| \cA_r(t_0) \tilde{u}(t)\|^2_0 - \| \cA_r(t) \tilde{u}(t)\|^2_0   )| 
\nonumber
\\
= &|\| \cA_r(t_0) \tilde{u}(t)\|_0 - \| \cA_r(t) \tilde{u}(t)\|_0   |(\| \cA_r(t_0) \tilde{u}(t)\|_0 + \| \cA_r(t) \tilde{u}(t)\|_0   )
\nonumber
\\
\leq & \mathscr{K}(\cC) \| (\mathfrak{K}(t)-\mathfrak{K}(t_0)) \la \nabla \ra ^r   \tilde{u}(t)\|_0  
\nonumber
 \\
\leq & \mathscr{K}(\cC) \| \mathfrak{K}(t)-\mathfrak{K}(t_0)\|_{\cL(H^0)}.
\nonumber
\end{align}
As elements in $\cL(H^0)$,  it is not hard to check that $\mathfrak{K}(\tilde{u})$ depends continuously on $\norm{\tilde{u}}_{r-1}$.
Combining with \eqref{Lip cont}, this observation shows 
that $[t \mapsto \|\cA_r(t_0) \tilde{u} (t)\|_0] $ is continuous at $t_0$; and thus 
\begin{center}
$\cA_r(t_0) \tilde{u} (t)$ is continuous in $t$ at $t_0$ w.r.t. $H^0$. 
\end{center}
Since $t_0$ is arbitrary, from
\begin{align}
\|\tilde{u}(t )-\tilde{u}(t_0)\|_r^2 \leq C \|\cA_r(t_0) \tilde{u} (t ) 
- \cA_r(t_0) \tilde{u} (t_0)\|_0^2,
\nonumber
\end{align}
we infer that $\tilde{u}   \in C(I; H^r)$.
Using this fact and equation~\eqref{1st_order_sys_ind}, we immediately conclude that
\begin{align}
\nonumber
\tilde{u}   \in C(I; H^r) \cap C^1(I, H^{r-1}).
\end{align}

\subsection{Solution to the original system} For Gevrey-regular data,
equations \eqref{E:u_unit} and \eqref{E:Div_T} admit a unique Gevrey-regular solution
\cite{BemficaDisconziNoronha,DisconziFollowupBemficaNoronha}. Solution
to \eqref{E:u_unit} and \eqref{E:Div_T} for Sobolev regular data, as in Theorem
\ref{T:main_theorem}, thus follows by a standard approximation argument, 
applying our energy estimates to the approximating Gevrey solutions.


\bibliography{References.bib}

\end{document}